\newtheorem{theorem}{Theorem}
\newtheorem{corollary}{Corollary}
\newtheorem{lemma}{Lemma}
\newtheorem{proposition}{Proposition}
\numberwithin{equation}{section}
\begin{document}
\title{Hankel determinants of the Cantor sequence}
\author{Zhi-Xiong WEN}
\address{School of Mathematics and Statistics\\
Huazhong University of Science and Technology\\
430074, Wuhan, P. R. China}
\email{zhi-xiong.wen@hust.edu.cn}
\author{Wen WU$^{*}$}
\address{Department of Mathematics\\
Hubei University\\
430062, Wuhan, P. R. China}
\email[Corresponding author]{hust.wuwen@gmail.com; wuwen@hubu.edu.cn}
\keywords{Cantor sequence, Hankel determinant, Automatic sequence, Irrationality exponent, Pad\'{e} approximation}
\subjclass[2000]{11J04, 11J82, 41A21}
\thanks{This work is supported by NSFC (Grant Nos. 11371156). \\ \indent $*$ Wen Wu is the corresponding author.}

\begin{abstract}
 In the paper, we give the recurrent equations of the Hankel determinants of the Cantor sequence, and show that the Hankel determinants as a double sequence is $3$-automatic. With the help of the Hankel determinants, we prove that the irrationality exponent of the Cantor number, i.e. the transcendental number with Cantor sequence as its $b$-ary expansion, equals $2$.
\end{abstract}
\maketitle

\section{Introduction.}
Let $\sigma$ be the \emph{Cantor substitution} defined over the alphabet $\mathcal{A}=\{a,b\}$ by
\[\sigma: a\mapsto aba, b\mapsto bbb.\]
Since the word $\sigma^n(a)$ is a prefix of $\sigma^{n+1}(a)$, i.e., $\sigma^{n+1}(a)=\sigma^{n}(a)w$ where
$w$ is a finite word over the alphabet $\mathcal{A}$, the sequence of words $(\sigma^{n}(a))_{n\in\mathbb{N}}$
converges to the infinite sequence
\[\mathbf{c}:=c_0c_1c_2\cdots\in \mathcal{A}^{\mathbb{N}},\]
called the \emph{Cantor sequence}. In this paper, we take $a=1$ and  $b=0$. Then the sequence $(c_n)_{n\geq 0}$ takes the value 1 if the $3$-ary expansion of $n$ contains no '1', and 0 otherwise(see \cite{ALL}, \cite{Fogg}). Here are the first few terms:
\begin{center}
\begin{tabular*}{9cm}{@{\extracolsep{\fill}}c|llllllllllr}

$n$ & 0 & 1 & 2 & 3 & 4 & 5 & 6 & 7 & 8 & $\cdots$\\
\hline
$c_n$ & 1 & 0 & 1 & 0 & 0 & 0 & 1 & 0 & 1 & $\cdots$ \\

\end{tabular*}
\end{center}
The Cantor sequence is an \emph{automatic sequence} (see \cite{ALL}), i.e., it can be generated by a finite automaton. In detail, the Cantor sequence can be recognized by the $3$-automaton in Figure \ref{fig:auto} in direct reading with the initial state $a$ and the output map $a\mapsto 1, b\mapsto 0$.
\begin{figure}[h]
\label{fig:auto}
\centering
\begin{tikzpicture}[shorten >=1pt,node distance=2cm,auto]
  \node[initial,state] (q_0)                    {$a/1$};
  \node[state]         (q_1) [right of=q_0] {$b/0$};
  \path[->] (q_0) edge [loop above] node {0,2} ()
                  edge              node {1}   (q_1)
            (q_1) edge [loop above] node {0,1,2} ();
\end{tikzpicture}
\caption{Automaton Generating the Cantor Sequence.}
\end{figure}
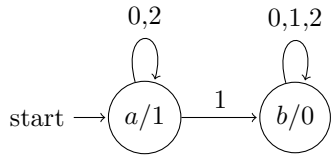
The Cantor sequence can be recognized as the discretization of the Cantor ternary set. In fact, we can construct the Cantor ternary set from the Cantor sequence as a recurrent set, following the way introduced by F. M. Dekking in \cite{Dek}.

\medskip

In this paper, we discuss some property of the Hankel determinants of the Cantor sequence.

\subsection{Hankel determinants of Cantor sequence.}
For a sequence of complex numbers $\mathbf{u}=u _{0}u_{1}\cdots u_{n}\cdots,$
the corresponding $(p,n)$-order \emph{Hankel matrix} $H_{n}^{p}$ is given
by
\begin{equation*}
H_{n}^{p}=%
\begin{pmatrix}
u_{p} & u_{p+1} & \cdots & u_{p+n-1} \\
u_{p+1} & u_{p+2} & \cdots & u_{p+n} \\
\cdots & \cdots & \cdots & \cdots \\
u_{p+n-1} & u_{p+n} & \cdots & u_{p+2n-2}%
\end{pmatrix}%
\end{equation*}%
where $n\geq 1$ and $p\geq 0$. And $|H_{n}^{p}|$ denote the determinant of
the matrix $H_{n}^{p}$.

Positive definiteness of Hankel matrices associated with a sequence are strong connected
the moment problem(see \cite{Sho}). The properties of Hankel determinants are also connnected
 to the combinatoric properties of the sequences and to the Pad\'{e} approximation(see \cite{Bak}, \cite{Bre}).
In \cite{Kam}, Kamae, Tamura and Wen studied the properties of Hankel determinants for the Fibonacci word and
give a quantitative relation between the Hankel determinant and the Pad\'{e} pair. Later, Tamura \cite{Tam} generalized the results
for a class of special sequences. Allouche, Peyri\`{e}re, Wen and
Wen studied the properties of Hankel determinants $|\mathcal{E}_n^p|$ of the Thue-Morse sequence in \cite{APWW}. They proved that
the Hankel determinants $|\mathcal{E}_n^p|$(modulo 2) recognized as a two-dimensional sequence (or \emph{double sequence})
 was 2-automatic. Recently, Gou, Wen and Wu \cite{GWW} proved that the Hankel determinants (modulo 2) of regular paper-folding sequence
are also periodic.
\medskip

Let $\Gamma_n^p$ be the $(p,n)$-order Hankel matrix of Cantor sequence. Our purpose
is to discuss the property of the two-dimensional sequence $\{|\Gamma_n^p|\}_{n,p\geq 0}$.
We have the following results.

\medskip
\noindent\textbf{Main Results.}
\emph{
\begin{enumerate}
\item For each $p\geq 0$, the sequence $\{|\Gamma_n^p|\}_{n\geq 0}$ is periodic(see Theorem $\ref{thm:perodic}$).
\item The two-dimensional sequence(modulo $3$) $\{|\Gamma_n^p|\}_{n,p\geq 0}$ is $3$-automatic (see Theorem $\ref{thm:auto}$).
\end{enumerate}
}
Figure \ref{fig:hankel} shows $|\Gamma_n^p|$ modulo $3$ for $1\leq n\leq 96$ and $0\leq p\leq 127$, where $0$'s are replaced by blue squares, $1$'s are replaced by green squares and $2$'s are replaced by red squares. Columns refer sequences $\{|\Gamma_n^p|\}_{n\geq 0}$.

\begin{figure}[htbp]
  \centering
  \includegraphics[width=\textwidth]{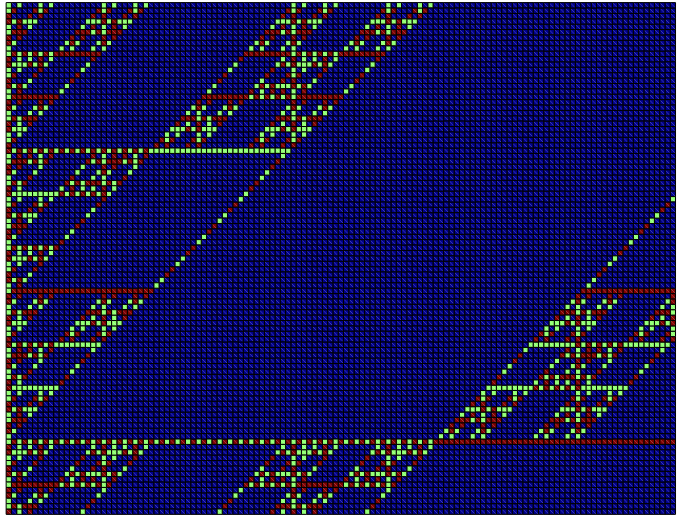}
  \vspace{-2cm}
  \caption{$|\Gamma^p_n|(\text{mod }3)$ for $1\leq n \leq 96$ and $0\leq p \leq 127$.}
	\label{fig:hankel}
\end{figure}

This article is organized as follows. Definitions and preliminaries are given in Section 2. In Section 3, we give recurrence formulae of Hankel determinants (modulo $3$) of the Cantor sequence. Section 4 is mainly devoted to characterize the periodicity and automaticity properties of Hankel determinants (modulo $3$) of the Cantor sequence. In Section 5, we will prove that the Pad\'{e} approximants to the generating series of the Cantor sequence exist and the irrationality exponent of the Cantor number and the difference sequence are both equal to $2$.

\section{Preliminaries.}

Assume that $M$ is an $n\times n$-matrix. Denote by $M^t$ the transpose of $%
M $. Let $M^{(i)}$ be the $n\times (n-1)$-matrix obtained by deleting the $i$%
-th \emph{column} of $M$, and $M_{(i)}$ be the $(n-1)\times n$ -matrix
obtained by deleting the $i$-th \emph{row} of $M$. $|M|$ denote the determinant of
the matrix $M$ and $\mathbf{0}_{m\times n}$ denote the $m\times n$ zero matrix.

It is easy to check that the Cantor sequence $\mathbf{c}=c_{0}c_{1}\cdots c_{n}\cdots \in \{0,1\}^{\mathbb{N}}$
can also be defined by the following recurrence equations:
\begin{equation}
c_{0}=1,~c_{3n}=c_{n},~c_{3n+1}=0,~c_{3n+2}=c_{n},\text{ for }n\geq 0.
\label{cantor}
\end{equation}%
For further consideration, we need another sequence $\mathbf{d}$ defined by $%
d_{n}=c_{n}+c_{n+2}$, and simultaneously we have
\begin{equation}
d_{3n}=2c_{n},~d_{3n+1}=c_{n+1},~d_{3n+2}=c_{n}.  \label{delta}
\end{equation}%
The Hankel matrices of this difference sequences $\mathbf{d}$ is denoted by
$\Delta_{n}^{p}$. An easy observation show that
\begin{equation*}
\Delta _{n}^{p}=\Gamma_{n}^{p}+\Gamma_{n}^{p+2}.
\end{equation*}

Let $K_{n}^{p}:=(u_{p+3(i+j-2)})_{1\leq i,j\leq n}$. When $\mathbf{u}=%
\mathbf{c}$ is the Cantor sequence, by (\ref{cantor}), we have for all $n\geq
1,p\geq 0,$
\begin{equation}
K_{n}^{3p}=\Gamma_{n}^{p},~K_{n}^{3p+1}=\mathbf{0}_{n\times
n},~K_{n}^{3p+2}=\Gamma_{n}^{p}.  \label{kc}
\end{equation}%
When $\mathbf{u}=\mathbf{d }$, by (\ref{delta}), for all $n\geq 1,p\geq 0,$
\begin{equation}
K_{n}^{3p}=2\Gamma_{n}^{p}\equiv
-\Gamma_{n}^{p},~K_{n}^{3p+1}=\Gamma_{n}^{p+1},~K_{n}^{3p+2}=\Gamma_{n}^{p},
\label{kd}
\end{equation}%
where the symbol $\equiv $, unless otherwise stated, means equality modulo $%
3 $ throughout this paper.

Now we will give an auxiliary lemma on matrices.

\begin{lemma}
\label{lem:matirx} For all $n\geq 2,p\geq 0$,
\begin{equation*}
\left\vert
\begin{matrix}
\Gamma_{n}^{p} & \Gamma_{n}^{p+1} \\
\Gamma_{n}^{p+1} & -\Gamma_{n}^{p}%
\end{matrix}%
\right\vert =(-1)^{n}|\Gamma_{n}^{p}|\cdot |\Delta_{n}^{p}|+(-1)^{n+1}|\Gamma
_{n+1}^{p}|\cdot |\Delta _{n-1}^{p}|.
\end{equation*}
\end{lemma}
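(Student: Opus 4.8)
The plan is to evaluate the $2n\times 2n$ determinant by elementary column operations that exploit the Hankel shift structure, followed by a generalized Laplace expansion along the top block. Introduce the Hankel column vectors $v_k=(c_{p+k+r-1})_{1\le r\le n}$, so that $\Gamma_n^p=[v_0,\dots,v_{n-1}]$, $\Gamma_n^{p+1}=[v_1,\dots,v_n]$, and the $j$-th column of $\Delta_n^p=\Gamma_n^p+\Gamma_n^{p+2}$ is $v_{j-1}+v_{j+1}$. In these terms the $j$-th column of the big matrix is $(v_{j-1};v_j)$ for $1\le j\le n$, and $(v_j;-v_{j-1})$ for the columns of the right half. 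The first key observation is that subtracting the $(j+1)$-st left column from the $(n+j)$-th column produces $(0;-(v_{j-1}+v_{j+1}))$, i.e. a column whose top half vanishes and whose bottom half is exactly $-$(the $j$-th column of $\Delta_n^p$). Performing this for $j=1,\dots,n-1$ leaves $n-1$ columns with zero top block while preserving the determinant.

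Next I would reorder the columns so that the two remaining groups with nonzero top (the original left block, contributing $v_0,\dots,v_{n-1}$, and the last column, contributing $v_n$) precede the $n-1$ new columns; this is a single cyclic shift of the last $n$ columns, contributing a sign $(-1)^{n-1}$. The top $n$ rows are then supported on exactly $n+1$ columns and coincide with the first $n$ rows of $\Gamma_{n+1}^p=[v_0,\dots,v_n]$. Expanding along these top $n$ rows, only the $n$-subsets obtained by omitting one of these $n+1$ columns survive; the surviving top minor for the omitted index $i$ is precisely the $(n+1,i)$-minor of $\Gamma_{n+1}^p$, and a short sign computation shows that the Laplace sign turns it into the cofactor $C_{n+1,i}$ of $\Gamma_{n+1}^p$.

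The crux is then a collapsing identity. By multilinearity, the resulting sum equals $\det[\mathbf{w}\mid\delta_1,\dots,\delta_{n-1}]$, where $\delta_1,\dots,\delta_{n-1}$ are the first $n-1$ columns of $\Delta_n^p$ and $\mathbf{w}=\sum_i C_{n+1,i}\,(\text{bottom column }i)$. Applying the cofactor orthogonality relation $\sum_i c_{p+k+i-2}C_{n+1,i}=|\Gamma_{n+1}^p|\,[k=n+1]$ together with $d_m=c_m+c_{m+2}$ and $C_{n+1,n+1}=|\Gamma_n^p|$, I expect $\mathbf{w}$ to collapse to $|\Gamma_{n+1}^p|\,e_n-|\Gamma_n^p|\,\delta_n$, with $e_n$ the last standard basis vector and $\delta_n$ the $n$-th column of $\Delta_n^p$. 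Expanding $\det[\mathbf{w}\mid\delta_1,\dots,\delta_{n-1}]$ linearly in its first column then isolates the two products $|\Gamma_{n+1}^p||\Delta_{n-1}^p|$ (from the $e_n$ term, via a cofactor expansion of a $\Delta$-matrix) and $|\Gamma_n^p||\Delta_n^p|$ (from the $\delta_n$ term, after moving $\delta_n$ into last position).

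The routine-but-delicate part, and the main obstacle, is the sign bookkeeping: the $(-1)^{n-1}$ from the reordering, the cofactor signs, the $(-1)^{n+1}$ from expanding the $e_n$-column, and the $(-1)^{n-1}$ from repositioning $\delta_n$ must all be combined to land exactly on the stated signs $(-1)^n$ and $(-1)^{n+1}$. It is worth emphasizing that this argument is purely algebraic and requires no reduction modulo $3$: it uses only the exact relation $d_m=c_m+c_{m+2}$ and the Hankel shift structure, so the identity holds over $\mathbb{Z}$.
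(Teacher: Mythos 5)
Your argument is correct, and I checked that the pieces fit: the column operations do produce $(0;-(v_{j-1}+v_{j+1}))$, the $n+1$ surviving top minors are exactly the last-row cofactors of $\Gamma_{n+1}^p$ (the Laplace sign $(-1)^{\sum_S s+\sum r}$ reduces to $(-1)^{n+1+i}$, matching the cofactor sign), and the adjugate relation collapses $\mathbf{w}$ to $|\Gamma_{n+1}^p|e_n-|\Gamma_n^p|\,\delta_n$ since the omitted bottom columns are $v_1,\dots,v_n,-v_{n-1}$ and $\sum_{i=1}^{n+1}C_{n+1,i}v_i=|\Gamma_{n+1}^p|e_n$; the two residual sign factors $(-1)^{n-1}$ (reordering, and extracting the minus signs from the $n-1$ columns $-\delta_j$) cancel, so the final expansion lands on $(-1)^n|\Gamma_n^p||\Delta_n^p|+(-1)^{n+1}|\Gamma_{n+1}^p||\Delta_{n-1}^p|$ as claimed. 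However, your route is genuinely different from the paper's. The paper borders the $2n\times 2n$ matrix to a $(2n+2)\times(2n+2)$ one by inserting two auxiliary rows and the columns $\alpha_{p+n}^n,\alpha_p^n$, uses the identity $(\Gamma_n^p~\alpha_{p+n}^n)=(\alpha_p^n~\Gamma_n^{p+1})$ to kill the whole top-right block at once, and then Laplace-expands along the two auxiliary rows; because those rows have only three nonzero entries, the expansion yields exactly the two desired products immediately, with no need for cofactor orthogonality. Your proof works with the original matrix, pays for that by getting $n+1$ terms from an $n$-row Laplace expansion, and then needs the adjugate identity to collapse them. The paper's bordering trick buys a shorter computation; your version is arguably more self-contained (no auxiliary construction to motivate) and makes explicit that the only inputs are the Hankel shift structure and $d_m=c_m+c_{m+2}$, so, as you note, the identity holds over $\mathbb{Z}$ -- which is also true of the paper's proof, since no reduction modulo $3$ is used there either.
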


\begin{proof}
Suppose $\alpha _{p}^{n}$ is the column vector of the form $%
(c_{p},c_{p+1},\cdots ,c_{p+n-1})^{t}$, then
\begin{equation*}
\left\vert
\begin{matrix}
\Gamma_{n}^{p} & \Gamma_{n}^{p+1} \\
\Gamma_{n}^{p+1} & -\Gamma_{n}^{p}%
\end{matrix}%
\right\vert =\left\vert
\begin{matrix}
\Gamma_{n}^{p} & \alpha_{p+n}^{n} & \alpha _{p}^{n} & \Gamma_{n}^{p+1} \\
\mathbf{0}_{1\times n} & 1 & 0 & \mathbf{0}_{1\times n} \\
\mathbf{0}_{1\times n} & 0 & 1 & \mathbf{0}_{1\times n} \\
\Gamma_{n}^{p+1} & \alpha_{p+n+1}^{n} & \mathbf{0}_{n\times 1} & -\Gamma
_{n}^{p}%
\end{matrix}%
\right\vert .
\end{equation*}%
Note that $(\Gamma_{n}^{p}~\alpha _{p+n}^{n})=(\alpha _{p}^{n}~\Gamma
_{n}^{p+1})$, we have
\begin{eqnarray*}
\left\vert
\begin{matrix}
\Gamma_{n}^{p} & \Gamma_{n}^{p+1} \\
\Gamma_{n}^{p+1} & -\Gamma_{n}^{p}%
\end{matrix}%
\right\vert  &=&\left\vert
\begin{matrix}
\Gamma_{n}^{p} & \alpha_{p+n}^{n} & 0 & \mathbf{0}_{n\times n} \\
\mathbf{0}_{1\times n} & 1 & 0 & 0\cdots 0~(-1) \\
\mathbf{0}_{1\times n} & 0 & 1 & 0\cdots 0~~~~0~~ \\
\Gamma_{n}^{p+1} & \alpha_{p+n+1}^{n} & -\alpha_{p+1}^{n} & -\Delta
_{n}^{p}%
\end{matrix}%
\right\vert  \\
&=&\left\vert
\begin{matrix}
1 & 0 \\
0 & 1%
\end{matrix}%
\right\vert \cdot \left\vert
\begin{matrix}
\Gamma _{n}^{p} & \mathbf{0} \\
\Gamma _{n}^{p+1} & -\Delta _{n}^{p}%
\end{matrix}%
\right\vert  \\
&&+(-1)^{n+1}\left\vert
\begin{matrix}
0 & -1 \\
1 & 0%
\end{matrix}%
\right\vert \left\vert
\begin{matrix}
(\Gamma _{n+1}^{p})_{(n+1)} & \mathbf{0}_{n\times (n-1)} \\
(\Gamma _{n+1}^{p+1})_{(n+1)} & -(\Delta _{n}^{p})^{(n)}%
\end{matrix}%
\right\vert .
\end{eqnarray*}

Since
\begin{eqnarray*}
\left\vert
\begin{matrix}
(\Gamma _{n+1}^{p})_{(n+1)} & \mathbf{0}_{n\times (n-1)} \\
(\Gamma _{n+1}^{p+1})_{(n+1)} & -(\Delta _{n}^{p})^{(n)}%
\end{matrix}%
\right\vert  &=&\left\vert
\begin{matrix}
(\Gamma _{n+1}^{p})_{(n+1)} & \mathbf{0}_{n\times (n-1)} \\
\begin{array}{c}
\mathbf{0}_{(n-1)\times n} \\
(\alpha _{p+n+1}^{n+1})^{t}%
\end{array}
& -(\Delta _{n}^{p})^{(n)}%
\end{matrix}%
\right\vert  \\
&=&(-1)^{(n+1)}|\Gamma _{n+1}^{p}|\cdot |-\Delta _{n-1}^{p}| \\
&=&|\Gamma _{n+1}^{p}|\cdot |\Delta _{n-1}^{p}|,
\end{eqnarray*}%
then
\begin{equation*}
\left\vert
\begin{matrix}
\Gamma _{n}^{p} & \Gamma _{n}^{p+1} \\
\Gamma _{n}^{p+1} & -\Gamma _{n}^{p}%
\end{matrix}%
\right\vert =(-1)^{n}|\Gamma _{n}^{p}|\cdot |\Delta
_{n}^{p}|+(-1)^{n+1}|\Gamma _{n+1}^{p}|\cdot |\Delta _{n-1}^{p}|.
\end{equation*}
\end{proof}

Let
\begin{equation*}
P(n)=(e_{1},e_{4},\cdots ,e_{3n_{1}-2},e_{2},e_{5},\cdots
,e_{3n_{2}-1},e_{3},e_{6},\cdots ,e_{3n_{3}}),
\end{equation*}%
where $n_{1}=[\frac{n+2}{3}],n_{2}=[\frac{n+1}{3}],n_{3}=[\frac{n}{3}]$ and $%
e_{j}$ is the $j$-th unit column vector of order $n$, i.e., the column
vector with $1$ as the $j$-th entry and zeros elsewhere. And $|P(n)|=\pm 1$. For simplicity, we
write $P$ instead of $P(n)$, when no confusion can occur. When consider $%
P(3n),P(3n+1),P(3n+2)$, the following diagram shows $n_{1},n_{2}$ and $n_{3}$
in these cases:
\begin{equation*}
\begin{array}{c|ccc}
& n_{1} & n_{2} & n_{3} \\ \hline
3n & n & n & n \\
3n+1 & n+1 & n & n \\
3n+2 & n+1 & n+1 & n%
\end{array}%
\end{equation*}

Suppose $M=(m_{i,j})_{1\leq i,j\leq n}$ is an $n\times n$ matrix, then
\begin{equation*}
P^{t}MP=\left(
\begin{matrix}
(m_{3i-2,3j-2})_{n_{1}\times n_{1}} & (m_{3i-2,3j-1})_{n_{1}\times n_{2}} &
(m_{3i-2,3j})_{n_{1}\times n_{3}} \\
(m_{3i-1,3j-2})_{n_{2}\times n_{1}} & (m_{3i-1,3j-1})_{n_{2}\times n_{2}} &
(m_{3i-1,3j})_{n_{2}\times n_{3}} \\
(m_{3i,3j-2})_{n_{3}\times n_{1}} & (m_{3i,3j-1})_{n_{3}\times n_{2}} &
(m_{3i,3j})_{n_{3}\times n_{3}}%
\end{matrix}%
\right) ,
\end{equation*}%
where $(m_{3i-2,3j-1})_{s\times t}$ means the matrix $(m_{3i-2,3j-1})_{1\leq
i\leq s,1\leq j\leq t}$.

\medskip
When $M=H_{3n}^{p},~H_{3n+1}^{p}$ and $H_{3n+2}^{p}$, we have
\begin{eqnarray}
& & P^{t}H_{3n}^{p}P  \notag \\
&=&
\begin{pmatrix}
(U_{p+3(i+j-2)})_{n\times n} & (U_{p+3(i+j-2)+1})_{n\times n} &
(U_{p+3(i+j-2)+2})_{n\times n} \\
(U_{p+3(i+j-2)+1})_{n\times n} & (U_{p+3(i+j-2)+2})_{n\times n} &
(U_{p+3(i+j-2)+3})_{n\times n} \\
(U_{p+3(i+j-2)+2})_{n\times n} & (U_{p+3(i+j-2)+3})_{n\times n} &
(U_{p+3(i+j-2)+4})_{n\times n}%
\end{pmatrix}
\notag \\
&=&
\begin{pmatrix}
K_{n}^{p} & K_{n}^{p+1} & K_{n}^{p+2} \\
K_{n}^{p+1} & K_{n}^{p+2} & K_{n}^{p+3} \\
K_{n}^{p+2} & K_{n}^{p+3} & K_{n}^{p+4}%
\end{pmatrix}%
,  \label{h3n}
\end{eqnarray}
\begin{eqnarray}
P^{t}H_{3n+1}^{p}P &=&
\begin{pmatrix}
K_{n+1}^{p} & (K_{n+1}^{p+1})^{(n+1)} & (K_{n+1}^{p+2})^{(n+1)} \\
(K_{n+1}^{p+1})_{(n+1)} & K_{n}^{p+2} & K_{n}^{p+3} \\
(K_{n+1}^{p+2})_{(n+1)} & K_{n}^{p+3} & K_{n}^{p+4}%
\end{pmatrix}%
,  \label{h3n+1} \\
P^{t}H_{3n+2}^{p}P &=&
\begin{pmatrix}
K_{n+1}^{p} & K_{n+1}^{p+1} & (K_{n+1}^{p+2})^{(n+1)} \\
K_{n+1}^{p+1} & K_{n+1}^{p+2} & (K_{n+1}^{p+3})^{(n+1)} \\
(K_{n+1}^{p+2})_{(n+1)} & (K_{n+1}^{p+3})_{(n+1)} & K_{n}^{p+4}%
\end{pmatrix}%
,  \label{h3n+2}
\end{eqnarray}

\section{Recurrent equations}
In this section, we establish the recurrence formulae for the sequence $|\Gamma_n^p|(n\geq 2, p\geq 0)$, which is the key result in this paper. Through these eighteen recurrence formulae, we can evaluate all the Hankel determinants $|\Gamma_n^p|,|\Delta_n^p|(n\geq 2, p\geq 0)$.
\begin{theorem}
\label{mainthm} For $p\geq 0$ and $n\geq 2$, one has

\begin{enumerate}
\item $|\Gamma _{3n}^{3p}|=(-1)^{n}|\Gamma _{n}^{p}|^{2}\cdot |\Delta
_{n}^{p}|+(-1)^{n+1}|\Gamma _{n}^{p}|\cdot |\Gamma _{n+1}^{p}|\cdot |\Delta
_{n-1}^{p}|,$

\item $|\Gamma _{3n+1}^{3p}|=(-1)^{n}|\Gamma _{n}^{p}|\cdot |\Gamma
_{n+1}^{p}|\cdot |\Delta _{n}^{p}|+(-1)^{n+1}|\Gamma _{n+1}^{p}|^{2}\cdot
|\Delta _{n-1}^{p}|,$

\item $|\Gamma _{3n+2}^{3p}|=(-1)^{n}|\Gamma _{n+1}^{p}|^{2}\cdot |\Delta
_{n}^{p}|,$

\item $|\Gamma _{3n}^{3p+1}|=(-1)^{n}|\Gamma _{n}^{p}|\cdot |\Gamma
_{n}^{p+1}|\cdot |\Delta _{n}^{p}|+(-1)^{n+1}|\Gamma _{n}^{p+1}|\cdot
|\Gamma _{n+1}^{p}|\cdot |\Delta _{n-1}^{p}|,$

\item $|\Gamma _{3n+1}^{3p+1}|=(-1)^{n+1}|\Gamma _{n+1}^{p}|^{2}\cdot
|\Delta _{n-1}^{p+1}|,$

\item $|\Gamma _{3n+2}^{3p+1}|=(-1)^{n+1}|\Gamma _{n+1}^{p}|^{2}\cdot
|\Delta _{n}^{p+1}|,$

\item $|\Gamma _{3n}^{3p+2}|=(-1)^{n}|\Gamma _{n}^{p+1}|^{2}\cdot |\Delta
_{n}^{p}|,$

\item $|\Gamma _{3n+1}^{3p+2}|=(-1)^{n}|\Gamma _{n}^{p+1}|\cdot |\Gamma
_{n+1}^{p}|\cdot |\Delta _{n}^{p+1}|+(-1)^{n+1}|\Gamma _{n+1}^{p}|\cdot
|\Gamma _{n+1}^{p+1}|\cdot |\Delta _{n-1}^{p+1}|,$

\item $|\Gamma _{3n+2}^{3p+2}|=(-1)^{n+1}|\Gamma _{n+1}^{p+1}|^{2}\cdot
|\Delta _{n}^{p}|,$

\item $|\Delta _{3n}^{3p}|\equiv (-1)^{n}|\Gamma _{n}^{p}|\cdot |\Delta
_{n}^{p}|^{2}+(-1)^{n+1}|\Gamma _{n+1}^{p}|\cdot |\Delta _{n-1}^{p}|\cdot
|\Delta _{n}^{p}|,$

\item $|\Delta _{3n+1}^{3p}|\equiv (-1)^{n+1}|\Gamma _{n+1}^{p}|\cdot
|\Delta _{n}^{p}|^{2},$

\item $|\Delta _{3n+2}^{3p}|\equiv (-1)^{n}|\Gamma _{n+2}^{p}|\cdot |\Delta
_{n}^{p}|^{2}+(-1)^{n+1}|\Gamma _{n+1}^{p}|\cdot |\Delta _{n}^{p}|\cdot
|\Delta _{n+1}^{p}|,$

\item $|\Delta _{3n}^{3p+1}|\equiv (-1)^{n}|\Gamma _{n}^{p+1}|\cdot |\Delta
_{n}^{p}|^{2},$

\item $|\Delta _{3n+1}^{3p+1}|\equiv (-1)^{n}|\Gamma _{n+1}^{p+1}|\cdot
|\Delta _{n}^{p}|^{2},$

\item $|\Delta _{3n+2}^{3p+1}|\equiv (-1)^{n}|\Gamma _{n+2}^{p}|\cdot
|\Delta _{n}^{p}|\cdot |\Delta _{n}^{p+1}|+(-1)^{n+1}|\Gamma
_{n+1}^{p}|\cdot |\Delta _{n}^{p+1}|\cdot |\Delta _{n+1}^{p}|,$

\item $|\Delta _{3n}^{3p+2}|\equiv (-1)^{n}|\Gamma _{n}^{p+1}|\cdot |\Delta
_{n}^{p}|\cdot |\Delta _{n}^{p+1}|+(-1)^{n+1}|\Gamma _{n+1}^{p+1}|\cdot
|\Delta _{n}^{p}|\cdot |\Delta _{n-1}^{p+1}|,$

\item $|\Delta _{3n+1}^{3p+2}|\equiv (-1)^{n}|\Gamma _{n+1}^{p}|\cdot
|\Delta _{n}^{p+1}|^{2},$

\item $|\Delta _{3n+2}^{3p+2}|\equiv (-1)^{n}|\Gamma _{n+2}^{p}|\cdot
|\Delta _{n}^{p+1}|^{2}.$
\end{enumerate}
\end{theorem}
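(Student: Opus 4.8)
The plan is to treat all eighteen identities by one uniform mechanism: conjugate the Hankel matrix by the permutation $P$ that sorts indices according to their residue modulo $3$, read off the resulting $3\times 3$ block form from (\ref{h3n})--(\ref{h3n+2}), replace every $K$-block by a $\Gamma$-block (or a zero block, or a signed $\Gamma$-block) using (\ref{kc}) or (\ref{kd}), and finally evaluate the block determinant by elementary block operations together with Lemma \ref{lem:matirx}. Since $|P|=\pm 1$ we have $|H_{3n+r}^{\,3p+s}|=|P^tH_{3n+r}^{\,3p+s}P|$ for every $r,s\in\{0,1,2\}$, so no determinant is lost in the conjugation.

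For the first nine identities one takes $\mathbf{u}=\mathbf{c}$, so that $H_m^q=\Gamma_m^q$ and the substitution is governed by (\ref{kc}); for the last nine one takes $\mathbf{u}=\mathbf{d}$, so that $H_m^q=\Delta_m^q$ and the substitution is governed by (\ref{kd}). Because (\ref{kd}) carries the factor $K_n^{3p}=2\Gamma_n^p\equiv-\Gamma_n^p$, which is only an equality modulo $3$, the $\mathbf{d}$-identities hold modulo $3$ (hence the symbol $\equiv$), while the $\mathbf{c}$-identities are exact.

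To illustrate the endgame, consider identity (1). Writing $A=\Gamma_n^p$ and $B=\Gamma_n^{p+1}$ and substituting (\ref{kc}) into (\ref{h3n}) with $p$ replaced by $3p$ (noting $K_n^{3p+1}=K_n^{3p+4}=\mathbf{0}$, $K_n^{3p+2}=A$, $K_n^{3p+3}=B$), the matrix $P^t\Gamma_{3n}^{3p}P$ becomes the block array with block rows $(A,\mathbf{0},A)$, $(\mathbf{0},A,B)$, $(A,B,\mathbf{0})$. Subtracting the first block row from the third and then the first block column from the third clears the off-diagonal $A$'s and leaves a block-diagonal matrix whose determinant is $|A|$ times $\left|\begin{smallmatrix}\Gamma_n^p&\Gamma_n^{p+1}\\\Gamma_n^{p+1}&-\Gamma_n^p\end{smallmatrix}\right|$; Lemma \ref{lem:matirx} evaluates the latter and reproduces exactly the right-hand side of (1). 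Every other case follows the same template: after substitution one is left with a $3\times 3$ block array in which enough blocks vanish that block row/column reduction collapses it either to a product of single determinants $|\Gamma_m^q|$ or to one $2\times 2$ block of the Lemma's form.

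The main obstacle is the bookkeeping in the cases $3n+1$ and $3n+2$, where (\ref{h3n+1}) and (\ref{h3n+2}) involve the rectangular blocks $(K_{n+1}^q)^{(n+1)}$ and $(K_{n+1}^q)_{(n+1)}$ obtained by deleting the last column or row. These mixed-size blocks must be padded or expanded carefully---exactly as in the proof of Lemma \ref{lem:matirx}, where a row of the form $(\alpha_{p+n+1}^{n+1})^t$ is inserted---so that the block operations are legitimate and the transitions among $\Gamma_n,\Gamma_{n+1},\Gamma_{n+2}$ and among $\Delta_{n-1},\Delta_n,\Delta_{n+1}$ emerge with the correct indices. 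The second delicate point, running through all eighteen cases, is the sign: each conjugation, each cofactor expansion across a deleted row or column, and each application of Lemma \ref{lem:matirx} contributes a power of $-1$, and it is the accumulation of these that distinguishes, say, $(-1)^n$ from $(-1)^{n+1}$ in the final formulae. I expect that verifying these signs and the $\Delta$-index shifts uniformly, rather than the individual determinant evaluations, will be the real work.
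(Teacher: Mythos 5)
Your proposal is correct and follows essentially the same route as the paper: conjugation by $P$ to obtain the block forms (\ref{h3n})--(\ref{h3n+2}), substitution via (\ref{kc}) for the nine $\Gamma$-identities and (\ref{kd}) (whence the congruence modulo $3$) for the nine $\Delta$-identities, and reduction by block row/column operations to a form handled by Lemma \ref{lem:matirx} -- your worked case (1) reproduces the paper's computation exactly. The two delicate points you flag (the rectangular blocks in the $3n+1$, $3n+2$ cases and the accumulation of signs) are precisely where the paper's appendix spends its effort.
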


\begin{proof}
$1)$ Combine (\ref{kc}) and (\ref{h3n}), we have
\begin{eqnarray*}
|P^{t}\Gamma _{3n}^{3p}P| &=&\left\vert
\begin{matrix}
\Gamma _{n}^{p} & \mathbf{0}_{n\times n} & \Gamma _{n}^{p} \\
\mathbf{0}_{n\times n} & \Gamma _{n}^{p} & \Gamma _{n}^{p+1} \\
\Gamma _{n}^{p} & \Gamma _{n}^{p+1} & \mathbf{0}_{n\times n}%
\end{matrix}%
\right\vert  \\
&=&\left\vert
\begin{matrix}
\Gamma _{n}^{p} & \mathbf{0}_{n\times n} & \mathbf{0}_{n\times n} \\
\mathbf{0}_{n\times n} & \Gamma _{n}^{p} & \Gamma _{n}^{p+1} \\
\mathbf{0}_{n\times n} & \Gamma _{n}^{p+1} & -\Gamma _{n}^{p}%
\end{matrix}%
\right\vert .
\end{eqnarray*}%
Hence
\begin{equation*}
|\Gamma _{3n}^{3p}|=|P^{t}\Gamma _{3n}^{3p}P|=|\Gamma _{n}^{p}|\cdot
\left\vert
\begin{matrix}
\Gamma _{n}^{p} & \Gamma _{n}^{p+1} \\
\Gamma _{n}^{p+1} & -\Gamma _{n}^{p}%
\end{matrix}%
\right\vert .
\end{equation*}%
By Lemma \ref{lem:matirx},
\begin{equation*}
|\Gamma _{3n}^{3p}|=(-1)^{n}|\Gamma _{n}^{p}|^{2}\cdot |\Delta
_{n}^{p}|+(-1)^{n+1}|\Gamma _{n}^{p}|\cdot |\Gamma _{n+1}^{p}|\cdot |\Delta
_{n-1}^{p}|.
\end{equation*}

$2)$ Combine (\ref{kc}) and (\ref{h3n+1}), we have
\begin{eqnarray*}
|P^{t}\Gamma _{3n+1}^{3p}P| &=&\left\vert
\begin{matrix}
\Gamma _{n+1}^{p} & \mathbf{0}_{{n+1}\times n} & (\Gamma _{n+1}^{p})^{(n+1)}
\\
\mathbf{0}_{n\times {n+1}} & \Gamma _{n}^{p} & \Gamma _{n}^{p+1} \\
(\Gamma _{n+1}^{p})_{(n+1)} & \Gamma _{n}^{p+1} & \mathbf{0}_{n\times n}%
\end{matrix}%
\right\vert  \\
&=&\left\vert
\begin{matrix}
\Gamma _{n+1}^{p} & \mathbf{0}_{{n+1}\times n} & \mathbf{0}_{{n+1}\times n}
\\
\mathbf{0}_{n\times {n+1}} & \Gamma _{n}^{p} & \Gamma _{n}^{p+1} \\
\mathbf{0}_{n\times {n+1}} & \Gamma _{n}^{p+1} & -\Gamma _{n}^{p}%
\end{matrix}%
\right\vert .
\end{eqnarray*}%
By Lemma \ref{lem:matirx},
\begin{eqnarray*}
|\Gamma _{3n+1}^{3p}| &=&|\Gamma _{n+1}^{p}|\cdot \left\vert
\begin{matrix}
\Gamma _{n}^{p} & \Gamma _{n}^{p+1} \\
\Gamma _{n}^{p+1} & -\Gamma _{n}^{p}%
\end{matrix}%
\right\vert  \\
&=&(-1)^{n}|\Gamma _{n}^{p}|\cdot |\Gamma _{n+1}^{p}|\cdot |\Delta
_{n}^{p}|+(-1)^{n+1}|\Gamma _{n+1}^{p}|^{2}\cdot |\Delta _{n-1}^{p}|.
\end{eqnarray*}

$3)$ Combine (\ref{kc}) and (\ref{h3n+2}), we have
\begin{eqnarray*}
|P^{t}\Gamma _{3n+2}^{3p}P| &=&\left\vert
\begin{matrix}
\Gamma _{n+1}^{p} & \mathbf{0}_{(n+1)\times (n+1)} & (\Gamma
_{n+1}^{p})^{(n+1)} \\
\mathbf{0}_{(n+1)\times (n+1)} & \Gamma _{n+1}^{p} & (\Gamma
_{n+1}^{p+1})^{(n+1)} \\
(\Gamma _{n+1}^{p})_{(n+1)} & (\Gamma _{n+1}^{p+1})_{(n+1)} & \mathbf{0}%
_{n\times n}%
\end{matrix}%
\right\vert  \\
&=&\left\vert
\begin{matrix}
\Gamma _{n+1}^{p} & \mathbf{0}_{(n+1)\times (n+1)} & \mathbf{0}_{(n+1)\times
n} \\
\mathbf{0}_{(n+1)\times (n+1)} & \Gamma _{n+1}^{p} & (\Gamma
_{n+1}^{p+1})^{(n+1)} \\
(\Gamma _{n+1}^{p})_{(n+1)} & (\Gamma _{n+1}^{p+1})_{(n+1)} & -\Gamma
_{n}^{p}%
\end{matrix}%
\right\vert  \\
&=&\left\vert
\begin{matrix}
\Gamma _{n+1}^{p} & \mathbf{0}_{(n+1)\times (n+1)} & \mathbf{0}_{(n+1)\times
n} \\
\mathbf{0}_{(n+1)\times (n+1)} & \Gamma _{n+1}^{p} & \mathbf{0}_{(n+1)\times
n} \\
(\Gamma _{n+1}^{p})_{(n+1)} & (\Gamma _{n+1}^{p+1})_{(n+1)} & -\Gamma
_{n}^{p}-\Gamma _{n}^{p+2}%
\end{matrix}%
\right\vert .
\end{eqnarray*}%
Therefore,
\begin{equation*}
|\Gamma _{3n+2}^{3p}|=|\Gamma _{n+1}^{p}|^{2}\cdot |-\Gamma _{n}^{p}-\Gamma
_{n}^{p+2}|=(-1)^{n}|\Gamma _{n+1}^{p}|^{2}\cdot |\Delta _{n}^{p}|.
\end{equation*}

Formulae $(4)$ to $(18)$ can be proved using similar computation. We state the proof in the appendix.
\end{proof}

Now, we will extend those eighteen recurrent formulae for all $n,p\geq 0$.
\begin{proposition}
\label{prop:1}Define $|\Delta _{0}^{p}|=1$ for $p\geq 0,$ and
\begin{equation*}
|\Gamma _{0}^{p}|=\left\{
\begin{array}{cc}
2 & \text{if }p=0 \\
1 & \text{if }p\geq 1%
\end{array}%
\right. ,|\Delta _{-1}^{p}|=\left\{
\begin{array}{cc}
1 & \text{if }p=0 \\
0 & \text{if }p\geq 1%
\end{array}%
\right. .
\end{equation*}%
Then formulae of Theorem \ref{mainthm} holds for $p\geq 0$ and $n\geq 0$.
\end{proposition}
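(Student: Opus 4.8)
The plan is to read Proposition \ref{prop:1} as a package of base-case verifications. Theorem \ref{mainthm} already proves all eighteen identities for $n\geq 2$, so only the values $n=0$ and $n=1$ remain, and for these I must check that each formula survives once the boundary symbols $|\Gamma_0^p|$, $|\Delta_0^p|$, $|\Delta_{-1}^p|$ are assigned the stated values. The guiding observation is that these are not genuine determinants but quantities reverse-engineered so that the recurrences close at the bottom of the induction; the whole task is to confirm the fit, splitting the two remaining values of $n$ according to whether the block machinery of Section 3 still applies.

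For $n=1$ I would avoid computing the size-$3,4,5$ Hankel determinants on the left directly and instead reuse the proof of Theorem \ref{mainthm}. That proof reduces each identity, via the permutation $P$ and the block formulae (\ref{h3n})--(\ref{h3n+2}) together with (\ref{kc})--(\ref{kd}), to an application of Lemma \ref{lem:matirx} at the same $n$; the hypothesis $n\geq 2$ enters only through that lemma. So the first step is to verify Lemma \ref{lem:matirx} at $n=1$. There the left-hand side is the determinant of the scalar matrix $\left(\begin{smallmatrix} c_p & c_{p+1} \\ c_{p+1} & -c_p \end{smallmatrix}\right)$, namely $-(c_p^2+c_{p+1}^2)$, while the right-hand side, using $|\Gamma_1^p|=c_p$, $|\Delta_1^p|=c_p+c_{p+2}$, $|\Gamma_2^p|=c_pc_{p+2}-c_{p+1}^2$ and $|\Delta_0^p|=1$, equals $-c_p(c_p+c_{p+2})+(c_pc_{p+2}-c_{p+1}^2)=-(c_p^2+c_{p+1}^2)$. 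Since the block reductions involve only blocks of sizes $n$ and $n+1$, hence $1$ and $2$ here, and remain valid matrix operations, all eighteen formulae follow at $n=1$.

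For $n=0$ the block machinery degenerates, since $P(0)$ and the $0\times 0$ blocks $\Gamma_0^p$ make (\ref{h3n}) meaningless, so I would verify the eighteen identities directly. By (\ref{cantor}) and (\ref{delta}) the left-hand sides, which involve only $|\Gamma_j^{3p+s}|$ and $|\Delta_j^{3p+s}|$ with $j\leq 2$, become explicit expressions in $c_p,c_{p\pm 1}$; for instance $|\Gamma_2^{3p}|=c_{3p}c_{3p+2}-c_{3p+1}^2=c_p^2=c_p$, matching the right-hand side $(-1)^0|\Gamma_1^p|^2|\Delta_0^p|=c_p^2$ of formula $(3)$. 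Each identity then collapses to an elementary relation among the $c_i$, exact for $(1)$--$(9)$ and modulo $3$ for $(10)$--$(18)$, using $c_i\in\{0,1\}$ and hence $c_i^2=c_i$. The conventions $|\Gamma_0^p|$ and $|\Delta_{-1}^p|$ enter only through a single case split: for $p=0$ one uses $|\Gamma_0^0|=2$, $|\Delta_{-1}^0|=1$ (so formula $(1)$ reads $2=2^2\cdot 1-2\cdot c_0\cdot 1$), while for $p\geq 1$ one uses $|\Gamma_0^p|=1$, $|\Delta_{-1}^p|=0$, so the exceptional term drops out.

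The main obstacle is simply the volume and care of the $n=0$ bookkeeping: with the block proof unavailable, all eighteen formulae must be checked by hand against the explicitly computed small determinants, and the two special values at $p=0$ have to be tracked separately to avoid an off-by-one in the exceptional terms. A secondary point needing attention is confirming that the appendix proofs of $(4)$--$(18)$, like those of $(1)$--$(3)$ shown above, invoke $n\geq 2$ only through Lemma \ref{lem:matirx} (at $n$ or at an index $\geq 2$) and never through an intermediate block of size $n-1$ that would become negative at $n=1$; once that is confirmed, the $n=1$ step is immediate.
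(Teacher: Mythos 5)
Your proposal is correct in substance, but it takes a genuinely different route from the paper for the $n=1$ cases. The paper's own proof is a one-line direct verification: using the recurrences $c_{3n}=c_{3n+2}=c_n$, $c_{3n+1}=0$ and $d_{3n}=2c_n$, $d_{3n+1}=c_{n+1}$, $d_{3n+2}=c_n$ together with $|\Gamma_1^p|=c_p$, $|\Delta_1^p|=d_p$, one simply computes both sides of all eighteen formulae for $n=0$ and $n=1$ ``one by one.'' Your $n=0$ treatment coincides with this (and your sample checks, e.g.\ $2=2^2\cdot 1-2\cdot c_0\cdot 1$ for formula (1) at $p=0$, are right). Your $n=1$ shortcut --- verify the statement of Lemma \ref{lem:matirx} at $n=1$ by hand, then rerun the block reductions --- is a legitimate alternative that avoids computing the $3\times 3$, $4\times 4$ and $5\times 5$ determinants on the left-hand sides, and your computation $-(c_p^2+c_{p+1}^2)=-c_p(c_p+c_{p+2})+(c_pc_{p+2}-c_{p+1}^2)$ is correct. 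What it buys is economy; what it costs is the check you yourself flag and then leave open: several of the appendix reductions (notably that of formula (5), and the proof of Lemma \ref{lem:matirx} itself) do pass through blocks of size $n-1$, which at $n=1$ are empty rather than merely ``not negative.'' This turns out to be harmless --- the only boundary symbol the right-hand sides invoke at $n=1$ is $|\Delta_0^p|=1$, which agrees with the determinant of the empty matrix, so the degenerate Laplace expansions still produce the stated formulae --- but to make your argument complete you must actually walk through those degenerate cases (or fall back on the paper's direct computation for the affected formulae) rather than merely noting that they need attention.
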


\begin{proof}
Using (\ref{cantor}), (\ref{delta}) and the facts: $|\Gamma _{1}^{p}|=c_{p},$
and $|\Delta _{1}^{p}|=d_{p}.$ We can check the formulae of Theorem \ref%
{mainthm} one by one.
\end{proof}

\section{Periodicity and automaticity properties.}
The periodicity and automaticity properties of the Hankel determinants
$|\Gamma_n^p|$, $|\Delta_n^p|$ $(n,p\geq 0)$ are discussed in this section.
By the recurrent formulae in Theorem \ref{mainthm} and Proposition \ref%
{prop:1}, to determine the quantities $\{|\Gamma _{n}^{p}|\}_{n\geq 0,p\geq
0},\{|\Delta _{n}^{p}|\}_{n\geq 0,p\geq 0}$, we only need to determine the
quantities for $p=0$ and $1$. The following two propositions are devoted to
this purpose.

\begin{proposition}
\label{prop:p0}With the above notation, we have
\begin{equation}
|\Gamma_{n}^{0}|\equiv \left\{
\begin{array}{cc}
1 & \text{if }n\equiv 1,2\mod 4 \\
2 & \text{if }n\equiv 3,0\mod 4%
\end{array}%
\right. ,|\Delta_{n}^{0}|\equiv \left\{
\begin{array}{cc}
2 & \text{if }n\equiv 1,2\mod 4 \\
1 & \text{if }n\equiv 3,0\mod 4%
\end{array}%
\right. .  \label{p0}
\end{equation}
\end{proposition}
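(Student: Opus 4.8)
The plan is to prove both congruences simultaneously by strong induction on $n$, using only the six recurrence formulae $(1)$, $(2)$, $(3)$ (for $|\Gamma|$) and $(10)$, $(11)$, $(12)$ (for $|\Delta|$) of Theorem \ref{mainthm}, specialized to $p=0$ and extended to small $n$ by Proposition \ref{prop:1}. The crucial observation is that these six formulae form a \emph{closed system} at $p=0$: each of $|\Gamma^{0}_{3n}|$, $|\Gamma^{0}_{3n+1}|$, $|\Gamma^{0}_{3n+2}|$, $|\Delta^{0}_{3n}|$, $|\Delta^{0}_{3n+1}|$, $|\Delta^{0}_{3n+2}|$ is expressed, modulo $3$, purely through the quantities $|\Gamma^{0}_m|$ and $|\Delta^{0}_m|$ with $m\in\{n-1,n,n+1,n+2\}$, no superscript other than $0$ ever appearing on the right-hand sides. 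Hence the two sequences $g(n):=|\Gamma^{0}_n|\bmod 3$ and $h(n):=|\Delta^{0}_n|\bmod 3$ determine one another without reference to any other column, and \eqref{p0} is precisely the assertion that they are eventually periodic of period $4$ with the stated values.

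First I would record the base values. From Proposition \ref{prop:1} and the definitions one reads off $g(0)=2,\ g(1)=1,\ g(2)=1$ and $h(0)=1,\ h(1)=2,\ h(2)=2$, all in agreement with \eqref{p0}. For the inductive step, fix $N\ge 3$ and write $N=3n+r$ with $n\ge 1$ and $r\in\{0,1,2\}$; this representation is unique. A short check shows that every index occurring on the right-hand side of the relevant formula lies in $\{n-1,n,n+1,n+2\}$ and is therefore strictly smaller than $N$ (the extreme case is the index $n+2$ appearing in formula $(12)$, where indeed $n+2<3n+2$ for $n\ge 1$). Consequently, under the induction hypothesis that \eqref{p0} holds for all indices below $N$, the six right-hand sides are completely determined.

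The verification then reduces to a finite case analysis. The values $g(n-1),\dots,g(n+2)$ and $h(n-1),\dots,h(n+2)$ supplied by the induction hypothesis depend only on the residues of $n-1,\dots,n+2$ modulo $4$, i.e. only on $n\bmod 4$, while the signs $(-1)^{n}$ and $(-1)^{n+1}$ depend only on $n\bmod 2$; so it suffices to run through the twelve cases indexed by $(n\bmod 4,\ r)\in\{0,1,2,3\}\times\{0,1,2\}$. In each case one substitutes the known residues into the corresponding $\Gamma$- and $\Delta$-formula, simplifies modulo $3$, and compares the outcome with the value prescribed by \eqref{p0} for the residue $N\equiv r-n\pmod 4$ (using $3n\equiv -n$). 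Each such substitution is an elementary arithmetic computation in $\mathbb{Z}/3\mathbb{Z}$, made easier by the fact that $g(m),h(m)\in\{1,2\}$, so that $g(m)^{2}\equiv h(m)^{2}\equiv 1\pmod 3$ throughout.

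I expect the only real difficulty to be organizational rather than conceptual: keeping the bookkeeping straight across the twelve cases, i.e. correctly tracking the four consecutive residues $n-1,n,n+1,n+2$ modulo $4$, the sign $(-1)^{n}$, and the target residue $r-n$ modulo $4$. A useful device that streamlines the argument and serves as a running consistency check is the relation $h(n)\equiv -g(n)\pmod 3$, which is exactly what \eqref{p0} encodes (since $1\equiv -2$ and $2\equiv -1$): one checks that the $\Delta$-recurrences are compatible with it, the compatibility ultimately resting on the pattern fact $g(n+2)\equiv -g(n)\pmod 3$. Thus the $\Delta$-cases can be read off from the $\Gamma$-cases once this relation is installed, and the twelve computations are routine, completing the induction.
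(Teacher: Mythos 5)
Your proposal is correct and follows essentially the same route as the paper: a simultaneous strong induction on $n$ for $|\Gamma_n^0|$ and $|\Delta_n^0|$, with base cases $n=0,1,2$, the three cases $N=3n,3n+1,3n+2$ handled by formulae (1)--(3) and (10)--(12) of Theorem \ref{mainthm} (extended via Proposition \ref{prop:1}), the reduction $3n\equiv -n\pmod 4$, and the simplifications $|\Gamma_m^0|^2\equiv|\Delta_m^0|^2\equiv 1$. The paper merely packages the residue check more compactly (e.g.\ $|\Gamma_{3l}^0|\equiv(-1)^l|\Gamma_l^0|$) where you enumerate the twelve cases explicitly, but the argument is the same.
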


\begin{proof}
We will prove these two assertion simultaneously. For $n=0,1,2,$ the above
equalities can be check directly. Assume that the equalities hold for $n\leq
k.$ By the induction hypothesis, we have for all $n<k,$%
\begin{eqnarray*}
|\Gamma _{n}^{0}|^{2}\equiv |\Delta _{n}^{0}|^{2}\equiv 1, &&|\Gamma
_{n}^{0}|\cdot |\Delta _{n}^{0}|\equiv 2, \\
|\Gamma _{n+1}^{0}|\cdot |\Delta _{n-1}^{0}|\equiv 1, &&|\Gamma
_{n+1}^{0}|+|\Delta _{n-1}^{0}|\equiv |\Delta _{n+1}^{2}|.
\end{eqnarray*}%
Then if $n=k+1=3l(l\geq 1),$ by Theorem \ref{mainthm} (1) and (10), we have%
\begin{eqnarray*}
|\Gamma _{n}^{0}| &=&(-1)^{l}|\Gamma _{l}^{0}|^{2}\cdot |\Delta
_{l}^{0}|+(-1)^{l+1}|\Gamma _{l}^{0}|\cdot |\Gamma _{l+1}^{0}|\cdot |\Delta
_{l-1}^{0}| \\
&\equiv &(-1)^{l}\left( 2|\Gamma _{l}^{0}|-|\Gamma _{l}^{0}|\right)  \\
&\equiv &(-1)^{l}|\Gamma _{l}^{0}|, \\
|\Delta _{n}^{0}| &\equiv &(-1)^{l}|\Gamma _{l}^{0}|\cdot |\Delta
_{l}^{0}|^{2}+(-1)^{l+1}|\Gamma _{l+1}^{0}|\cdot |\Delta _{l-1}^{0}|\cdot
|\Delta _{l}^{0}| \\
&\equiv &(-1)^{l}\left( 2|\Delta _{l}^{0}|-|\Delta _{l}^{0}|\right)  \\
&\equiv &(-1)^{l}|\Delta _{l}^{0}|.
\end{eqnarray*}%
Since $n=3l\equiv -l(\textrm{mod } 4)$, the above two equalities implies (\ref{p0}).

When $n=k+1=3l+1(l\geq 1),$ by formulae (2) and (11) of Theorem \ref{mainthm},
\begin{eqnarray*}
|\Gamma _{3l+1}^{0}| &=&(-1)^{l}|\Gamma _{l}^{0}|\cdot |\Gamma
_{l+1}^{0}|\cdot |\Delta _{l}^{0}|+(-1)^{l+1}|\Gamma _{l+2}^{0}|^{2}\cdot
|\Delta _{l-1}^{0}| \\
&\equiv &(-1)^{l}\left( 2|\Gamma _{l+1}^{0}|-|\Delta _{l-1}^{0}|\right)  \\
&\equiv &(-1)^{l+1}|\Delta _{l+1}^{0}|, \\
|\Delta _{3l+1}^{0}| &\equiv &(-1)^{l+1}|\Gamma _{l+1}^{0}|\cdot |\Delta
_{l}^{0}|^{2} \\
&\equiv &(-1)^{l+1}|\Gamma _{l+1}^{0}|.
\end{eqnarray*}%
Note that $n=3l+1\equiv 1-l(\textrm{mod } 4)$, (\ref{p0}) follows from the above two
equalities.

When $n=k+1=3l+2(l\geq 1),$ by formulae (3) and (12) of Theorem \ref{mainthm}, we have
\begin{eqnarray*}
|\Gamma _{3l+2}^{0}| &=&(-1)^{l}|\Gamma _{l+1}^{0}|^{2}\cdot |\Delta
_{l}^{0}| \\
&\equiv &(-1)^{l}|\Delta _{l}^{0}|, \\
|\Delta _{3l+2}^{0}| &\equiv &(-1)^{l}|\Gamma _{l+2}^{0}|\cdot |\Delta
_{l}^{0}|^{2}+(-1)^{l+1}|\Gamma _{l+1}^{0}|\cdot |\Delta _{l}^{0}|\cdot
|\Delta _{l+1}^{0}| \\
&\equiv &(-1)^{l}\left( |\Delta _{l}^{0}|-2|\Delta _{l}^{0}|\right)
=(-1)^{l+1}|\Delta _{l}^{0}|.
\end{eqnarray*}%
These two equalities, combining with the fact $n=3l+2\equiv 2-l(\textrm{mod }4),$
lead to (\ref{p0}). Thus the assertions are proved.
\end{proof}

\begin{proposition}
\label{prop:p1}For $p=1$, we have
\begin{equation}
|\Gamma _{n}^{1}|\equiv |\Delta _{n}^{1}|\equiv \left\{
\begin{array}{ll}
0 & \text{if }n\equiv 1,3\mod 4 \\
2 & \text{if }n\equiv 2\mod 4 \\
1 & \text{if }n\equiv 0\mod 4%
\end{array}%
\right. .  \label{p1}
\end{equation}
\end{proposition}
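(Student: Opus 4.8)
The plan is to prove the two assertions $|\Gamma_n^1|\equiv v(n)$ and $|\Delta_n^1|\equiv v(n)$ simultaneously by induction on $n$, where $v(n)$ abbreviates the period-$4$ pattern on the right-hand side of (\ref{p1}), i.e. $v(n)\equiv 1,0,2,0$ according as $n\equiv 0,1,2,3\pmod 4$. This mirrors the proof of Proposition \ref{prop:p0}. The key structural observation is that among the recurrences of Theorem \ref{mainthm}, the only ones whose left-hand side carries superscript $1$ are those of the form $3p+1$ with $p=0$, namely (4), (5), (6) for $\Gamma$ and (13), (14), (15) for $\Delta$; once $p=0$ is substituted, their right-hand sides involve only superscript-$0$ quantities (already pinned down by Proposition \ref{prop:p0}) and smaller-index superscript-$1$ quantities. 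Thus the superscript-$1$ sequence is genuinely self-contained once the $p=0$ data are treated as known constants.

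First I would record the auxiliary congruences that follow at once from Proposition \ref{prop:p0}: for all admissible $m$,
\[
|\Gamma_m^0|^2\equiv|\Delta_m^0|^2\equiv 1,\qquad |\Gamma_m^0|\cdot|\Delta_m^0|\equiv -1,\qquad |\Gamma_{m+1}^0|\cdot|\Delta_{m-1}^0|\equiv 1,\qquad |\Gamma_{m+2}^0|\cdot|\Delta_m^0|\equiv 1.
\]
Each is a one-line check over the four residues of $m\bmod 4$, using that a product of the values $1$ and $2$ is again $1$ or $2$ modulo $3$. These are exactly the superscript-$0$ factors appearing in (4)--(6) and (13)--(15) after setting $p=0$, so invoking them collapses all superscript-$0$ data to $\pm 1$.

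Next I would settle the base cases $n=0,1,2$ directly, using $|\Gamma_0^1|=|\Delta_0^1|=1$ from Proposition \ref{prop:1} and the explicit $1\times 1$ and $2\times 2$ evaluations of $\Gamma_1^1,\Gamma_2^1,\Delta_1^1,\Delta_2^1$ from (\ref{cantor})--(\ref{delta}). For the inductive step, assuming the claim through index $k$ and writing $n=k+1$, I would split into $n=3l$, $3l+1$, $3l+2$. In the case $n=3l$, formula (4) at $p=0$ factors as $(-1)^l|\Gamma_l^1|\bigl(|\Gamma_l^0||\Delta_l^0|-|\Gamma_{l+1}^0||\Delta_{l-1}^0|\bigr)\equiv(-1)^l|\Gamma_l^1|$ by the auxiliary congruences, while (13) gives $|\Delta_{3l}^1|\equiv(-1)^l|\Gamma_l^1|$ as well. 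Likewise (5) and (14) handle $n=3l+1$, yielding $(-1)^{l+1}|\Delta_{l-1}^1|$ and $(-1)^l|\Gamma_{l+1}^1|$, and (6) and (15) handle $n=3l+2$, where the bracket in (15) simplifies to $2$, producing $(-1)^{l+1}|\Delta_l^1|$ in agreement with (6). In every branch the two resulting expressions coincide, which yields the asserted equality $|\Gamma_n^1|\equiv|\Delta_n^1|$; the common value then matches $v(n)$ after translating the index through $3l\equiv -l$, $3l+1\equiv 1-l$, $3l+2\equiv 2-l\pmod 4$ and applying the induction hypothesis to the smaller index $l$, $l\pm 1$.

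The hard part will be purely bookkeeping rather than conceptual: because the superscript-$1$ sequence vanishes on the odd residues $n\equiv 1,3\pmod 4$, one must track the sign $(-1)^l$ together with the shift between $l\bmod 4$ and $n\bmod 4$ with care, so that (for example) a factor $(-1)^l\cdot 2$ correctly turns a value $2$ into a value $1$ modulo $3$ in the right residue class. No idea beyond those already used in Proposition \ref{prop:p0} is required; the argument is a finite, residue-by-residue verification driven by the six recurrences at $p=0$.
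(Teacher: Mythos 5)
Your proposal is correct and follows essentially the same route as the paper's own proof: a simultaneous induction on $n$ for $|\Gamma_n^1|$ and $|\Delta_n^1|$, splitting into the cases $n=3l,3l+1,3l+2$ and applying formulae (4)--(6) and (13)--(15) of Theorem \ref{mainthm} at $p=0$, with the superscript-$0$ factors collapsed via the congruences from Proposition \ref{prop:p0} and the residues matched through $3l\equiv -l$, $3l+1\equiv 1-l$, $3l+2\equiv 2-l\pmod 4$. The only cosmetic difference is that you phrase the cross-checks $(-1)^{l+1}|\Delta_{l-1}^1|\equiv(-1)^{l}|\Gamma_{l+1}^1|$ through the antiperiodicity $v(m+2)\equiv -v(m)$ of the target pattern, where the paper records the equivalent induction-hypothesis relation $|\Gamma_{n-1}^1|\equiv -|\Gamma_{n+1}^1|$ explicitly.
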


\begin{proof}
These two assertions will be proved simultaneously. For $n=0,1,2,$ the above
equalities can be check directly. Assume that the equalities hold for $n\leq
k.$ According to \ref{p0}, for all $n\geq 1,$%
\begin{equation*}
|\Gamma _{n}^{0}|^{2}\equiv |\Delta _{n}^{0}|^{2}\equiv 1,|\Gamma
_{n}^{0}|\cdot |\Delta _{n}^{0}|\equiv 2,|\Gamma _{n+1}^{0}|\cdot |\Delta
_{n-1}^{0}|\equiv 1.
\end{equation*}%
By the induction hypothesis, for all $1\leq n<k,$%
\begin{equation*}
\left\vert \Gamma _{n-1}^{1}\right\vert \equiv -\left\vert \Gamma
_{n+1}^{1}\right\vert .
\end{equation*}%
Then if $n=k+1=3l(l\geq 1),$ by formulae (4) and (13) of Theorem \ref{mainthm}, we have%
\begin{eqnarray*}
|\Gamma _{n}^{1}| &=&(-1)^{l}|\Gamma _{l}^{0}|\cdot |\Gamma _{l}^{1}|\cdot
|\Delta _{l}^{0}|+(-1)^{l+1}|\Gamma _{l}^{1}|\cdot |\Gamma _{l+1}^{0}|\cdot
|\Delta _{l-1}^{0}| \\
&\equiv &(-1)^{l}\left( 2|\Gamma _{l}^{1}|-|\Gamma _{l}^{1}|\right)  \\
&\equiv &(-1)^{l}|\Gamma _{l}^{1}|, \\
|\Delta _{n}^{1}| &\equiv &(-1)^{l}|\Gamma _{l}^{1}|\cdot |\Delta
_{l}^{0}|^{2}\equiv (-1)^{l}|\Gamma _{l}^{1}|.
\end{eqnarray*}%
Since $n=3l\equiv -l(\textrm{mod } 4)$, (\ref{p1}) holds in this case.

When $n=k+1=3l+1(l\geq 1),$ by formulae (5) and (14) and Theorem \ref{mainthm}, we have%
\begin{eqnarray*}
|\Gamma _{n}^{1}| &=&(-1)^{l+1}|\Gamma _{l+1}^{0}|^{2}\cdot |\Delta
_{l-1}^{1}|\equiv (-1)^{l+1}|\Delta _{l-1}^{1}|\equiv (-1)^{l}|\Gamma
_{l+1}^{1}|, \\
|\Delta _{n}^{1}| &\equiv &(-1)^{l}|\Gamma _{l+1}^{1}|\cdot |\Delta
_{l}^{0}|^{2}\equiv (-1)^{l}|\Gamma _{l+1}^{1}|.
\end{eqnarray*}%
Since $n=3l+1\equiv 1-l(\textrm{mod } 4)$, (\ref{p1}) holds in this case.

When $n=k+1=3l+2(l\geq 1),$ by formulae (6) and (15) of Theorem \ref{mainthm}, we have%
\begin{eqnarray*}
|\Gamma _{n}^{1}| &=&(-1)^{l+1}|\Gamma _{l+1}^{0}|^{2}\cdot |\Delta
_{l}^{1}|\equiv (-1)^{l+1}|\Delta _{l}^{1}|, \\
|\Delta _{n}^{1}| &\equiv &(-1)^{l}|\Gamma _{l+2}^{0}|\cdot |\Delta
_{l}^{0}|\cdot |\Delta _{l}^{1}|+(-1)^{l+1}|\Gamma _{l+1}^{0}|\cdot |\Delta
_{l}^{1}|\cdot |\Delta _{l+1}^{0}| \\
&\equiv &(-1)^{l}\left( |\Delta _{l}^{1}|-2|\Delta _{l}^{1}|\right) \equiv
(-1)^{l+1}|\Delta _{l}^{1}|.
\end{eqnarray*}%
These two equalities, combining with the fact $n=3l+2\equiv 2-l(\textrm{mod } 4),$
lead to (\ref{p1}). Thus the assertions are proved.
\end{proof}

\subsection{Periodicity properties.}

Let $(u_{n})_{n\geq 0}$ be a sequence with $u_{n}\in \mathbb{F}_{3}$, then
the formal power series
\begin{equation*}
u(x)=\sum_{n\geq 0}u_{n}x^{n}
\end{equation*}%
is called the \emph{generating series} of the sequence $(u_{n})_{n\geq 0}.$

A sequence $(u_{n})_{n\geq 0}$ is periodic of period $t$ if and only if
its generating series adds up to a rational fraction of the form $\frac{P(x)%
}{1-x^{t}}$, where $P(x)$ is a polynomial of degree less than $t$.

Let $P(x)=\sum_{n\geq 0}a_{n}x^{n}$ and $Q(x)=\sum_{n\geq 0}b_{n}x^{n}$ be
two formal power series with $a_{n},b_{n}\in \mathbb{F}_{3}$, then their
\emph{Hadamard product} is defined to be
\begin{equation*}
P(x)\star Q(x)=\sum_{n\geq 0}a_{n}b_{n}x^{n}.
\end{equation*}%
In addition, if $(a_{n})_{n\geq 0}$ and $(b_{n})_{n\geq 0}$ are periodic of
period $s$ and $t$ respectively, then $P(x)\star Q(x)$ is the generating
series of the periodic sequence $(a_{n}b_{n})_{n\geq 0}$ having $[s,t]$ as a
period, where $[s,t]$ denotes the lowest common multiple of $s$ and $t$.

For $p\geq 0$, define
\begin{equation*}
\begin{array}{ll}
f^{(p)}(x)=\sum_{n\geq 0}\left\vert \Gamma_{n}^{p}\right\vert x^{n}, &
g^{(p)}(x)=\sum_{n\geq 0}\left\vert D _{n}^{p}\right\vert x^{n},%
\end{array}%
\end{equation*}%
and
\begin{equation*}
h_{0}(x)=\sum_{n\geq 0}(-1)^{n}x^{n},\ h_{1}(x)=\sum_{n\geq 0}(-1)^{n+1}x^{n}
\end{equation*}%
where the coefficients are taken modulo $3$, with the convention of
Proposition \ref{prop:1}.

By Proposition \ref{prop:p0} and Proposition \ref{prop:p1}, we have
\begin{equation}
\left\{
\begin{array}{ll}
f^{(0)}=\frac{2+x+x^{2}+2x^{3}}{1-x^{4}}, & g^{(0)}=\frac{1+2x+2x^{2}+x^{3}}{%
1-x^{4}}, \\
f^{(1)}=\frac{1+2x^{2}}{1-x^{4}}, & g^{(1)}=\frac{1+2x^{2}}{1-x^{4}}.%
\end{array}%
\right.  \label{fraction}
\end{equation}

Using the recurrent formulae in Theorem \ref{mainthm}, we can compute the
above quantities recursively. We compute $f^{(2)}$ and $g^{(2)}$ as an
example.

\begin{eqnarray*}
f^{(2)} &=&\sum_{n\geq 0}|\Gamma _{n}^{2}|x^{n}=\sum_{n\geq 0}(|\Gamma
_{3n}^{2}|+|\Gamma _{3n+1}^{2}|x+|\Gamma _{3n+2}^{2}|x^{2})x^{3n} \\
&=&\sum_{n\geq 0}\Big[(-1)^{n}|\Gamma _{n}^{1}|^{2}|\Delta
_{n}^{0}|+\big((-1)^{n}|\Gamma _{n}^{1}||\Gamma _{n+1}^{0}||\Delta _{n}^{1}| \\
&&+(-1)^{n+1}|\Gamma _{n+1}^{0}||\Gamma _{n+1}^{1}||\Delta
_{n-1}^{1}|\big)x+(-1)^{n+1}|\Gamma _{n+1}^{1}|^{2}|\Delta _{n}^{0}|x^{2}\Big]x^{3n}
\\
&&\text{by Theorem \ref{mainthm}} \\
&=&(h_{0}\star f^{(1)}\star f^{(1)}\star g^{(0)})^{3}+x(h_{0}\star
f^{(1)}\star \widehat{f^{(0)}}\star g^{(1)} \\
&&+h_{1}\star \widehat{f^{(0)}}\star \widehat{f^{(1)}}\star \overline{g^{(1)}%
})^{3}+x^{2}(h_{1}\star \widehat{f^{(1)}}\star \widehat{f^{(1)}}\star
g^{(0)})^{3}
\end{eqnarray*}%
where
\begin{equation}
h_{0}(x)=\sum_{n\geq 0}(-1)^{n}x^{n},\ h_{1}(x)=\sum_{n\geq 0}(-1)^{n+1}x^{n}
\label{h0}
\end{equation}%
and
\begin{equation}
\widehat{f^{(1)}}(x)=\sum_{n\geq 0}\left\vert \Gamma _{n+1}^{1}\right\vert
x^{n},\ \overline{g^{(1)}}(x)=\sum_{n\geq 0}\left\vert \Delta
_{n-1}^{1}\right\vert x^{n}.  \label{f1}
\end{equation}%
Thus by (\ref{fraction}), (\ref{h0}) and (\ref{f1}), we have%
\begin{eqnarray*}
f^{(2)} &=&\left( \frac{1+2x^{2}}{1-x^{4}}\right) ^{3}+x\left( \frac{1+2x^{2}%
}{1-x^{4}}+\frac{2x+x^{3}}{1-x^{4}}\right) ^{3}+x^{2}\left( \frac{2x+x^{3}}{%
1-x^{4}}\right) ^{3} \\
&=&\frac{1+x+2x^{4}+2x^{5}+2x^{6}+2x^{7}+x^{10}+x^{11}}{1-x^{12}};
\end{eqnarray*}%
\begin{eqnarray*}
g^{(2)} &=&\sum_{n\geq 0}|\Delta _{n}^{2}|x^{n}=\sum_{n\geq 0}(|\Delta
_{3n}^{2}|+x|\Delta _{3n+1}^{2}|+x^{2}|\Delta _{3n+2}^{2}|)x^{3n} \\
&=&\sum_{n\geq 0}\Big[\left( (-1)^{n}|\Gamma _{n}^{1}||\Delta _{n}^{0}||\Delta
_{n}^{1}|+(-1)^{n+1}|\Gamma _{n+1}^{1}||\Delta _{n}^{0}||\Delta
_{n-1}^{1}|\right)  \\
&&+x\big((-1)^{n}|\Gamma _{n+1}^{0}||\Delta _{n}^{1}|^{2}\big)+x^{2}\big((-1)^{n}|\Gamma
_{n+2}^{0}||\Delta _{n}^{1}|^{2}\big)\Big]x^{3n} \\
&=&(h_{0}\star f^{(1)}\star g^{(0)}\star g^{(1)}+h_{1}\star \widehat{f^{(1)}}%
\star g^{(0)}\star \overline{g^{(1)}})^{3} \\
&&+x(h_{0}\star \widehat{f^{(0)}}\star g^{(1)}\star
g^{(1)})^{3}+x^{2}(h_{0}\star \widehat{\widehat{f^{(0)}}}\star g^{(1)}\star
g^{(1)})^{3}
\end{eqnarray*}%
where%
\begin{equation}
\widehat{\widehat{f^{(0)}}}(x)=\sum_{n\geq 0}\left\vert \Gamma
_{n+2}^{0}\right\vert x^{n}.  \label{f2}
\end{equation}%
Thus by (\ref{fraction}), (\ref{h0}), (\ref{f1}) and (\ref{f2}), we have%
\begin{equation*}
g^{(2)}=\frac{1+x+x^{2}+2x^{6}+2x^{7}+2x^{8}}{1-x^{12}}.
\end{equation*}

\begin{theorem}\label{thm:perodic}
For any $p\geq 0$, the sequences $($modulo $3)$%
\begin{equation*}
\{\left\vert \Gamma _{n}^{p}\right\vert \}_{n\geq 0},\{\left\vert \Delta
_{n}^{p}\right\vert \}_{n\geq 0}
\end{equation*}%
are both periodic. Moreover, $12\cdot 3^{k}$ is a period if $3^{k}+1\leq
p\leq 3^{k+1}.$
\end{theorem}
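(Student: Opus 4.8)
The plan is to work throughout in $\mathbb{F}_3[[x]]$ with the generating series $f^{(p)}(x)=\sum_{n\ge0}|\Gamma_n^p|x^n$ and $g^{(p)}(x)=\sum_{n\ge0}|\Delta_n^p|x^n$, and to prove the assertions for $|\Gamma_n^p|$ and $|\Delta_n^p|$ simultaneously by induction. The three facts I would rely on are: (i) a sequence is periodic with period dividing $T$ iff its generating series equals $Q(x)/(1-x^{T})$ with $\deg Q<T$; (ii) the Hadamard product of sequences of periods $s$ and $t$ has period dividing $[s,t]$, and a shift of a period-$T$ sequence is again of period $T$; and (iii) the Frobenius identity $B(x)^{3}=B(x^{3})$, valid because $b^{3}=b$ for every $b\in\mathbb{F}_{3}$, which sends a series of period $T$ to one of period $3T$.

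The first step is to translate the eighteen recurrences of Theorem \ref{mainthm} into generating-series identities, exactly as in the worked computation of $f^{(2)}$ and $g^{(2)}$. Splitting each sequence $(|\Gamma_n^{3p+s}|)_n$ according to $n\bmod 3$ and using (iii) to encode the placement $m\mapsto 3m+r$, every formula becomes an identity of the shape
\[
f^{(3p+s)}(x)=\sum_{r=0}^{2}x^{r}\,B_{r}(x)^{3},\qquad g^{(3p+s)}(x)=\sum_{r=0}^{2}x^{r}\,C_{r}(x)^{3},
\]
where each $B_{r},C_{r}$ is a sum of Hadamard products of $h_{0},h_{1}$ with finitely many series among $f^{(p)},f^{(p+1)},g^{(p)},g^{(p+1)}$ and their shifts. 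The crucial structural point to record is that the formulae for $s=0$, namely $(1)$--$(3)$ and $(10)$--$(12)$, involve \emph{only} level $p$, whereas those for $s=1,2$ involve levels $p$ and $p+1$.

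I would then run an induction on $k\ge0$ with hypothesis: for every $p$ with $3^{k}+1\le p\le 3^{k+1}$, both $f^{(p)}$ and $g^{(p)}$ have period dividing $12\cdot3^{k}$. The seeds are Propositions \ref{prop:p0} and \ref{prop:p1} (period $4$ for $p=0,1$); together with the explicit period-$12$ expressions for $f^{(2)},g^{(2)}$ and one application of the mechanism to get $f^{(3)},g^{(3)}$ (here $p=3=3\cdot1+0$, so $s=0$ and only level $1$ is needed, giving period $3\cdot4=12$), these settle $k=0$. For the inductive step, write $P=3p+s\in[3^{k}+1,3^{k+1}]$ with $p=\lfloor P/3\rfloor$. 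A short index computation shows that $p$, and $p+1$ when $s\ne0$, all lie in $[3^{k-1},3^{k}]$; by the inductive hypothesis---and since $3^{k-1}$ is the top of the interval two steps back, with period $12\cdot3^{k-2}\mid 12\cdot3^{k-1}$---every series feeding $B_r,C_r$ has period dividing $12\cdot3^{k-1}$. Because $12\cdot3^{k-1}$ is even, forming Hadamard products with the period-$2$ series $h_{0},h_{1}$ and taking shifts preserves ``period dividing $12\cdot3^{k-1}$'' by (ii); applying (iii) to cube and then summing $\sum_{r}x^{r}(\cdot)^{3}$ multiplies the period by $3$, yielding period dividing $12\cdot3^{k}$ for $f^{(P)},g^{(P)}$. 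This closes the induction, and since ``$12\cdot3^{k}$ is a period'' means precisely that the minimal period divides $12\cdot3^{k}$, the theorem follows.

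The only place I expect a genuine obstacle is the index arithmetic at the two endpoints, since the reduction $P\mapsto\{p,p+1\}$ must never leave the already-established ranges. The delicate case is the top endpoint $P=3^{k+1}$: here $s=0$, and it is exactly the observation that the $s=0$ formulae need only level $p=3^{k}$---never the out-of-range level $3^{k}+1$---that prevents the induction from becoming circular. Dually, the bottom endpoint $P=3^{k}+1$ forces the level $p=3^{k-1}$, which lives two ranges back, so the hypothesis is invoked there for the smaller period $12\cdot3^{k-2}$; this is harmless because it divides $12\cdot3^{k-1}$, but it is the reason the induction is naturally phrased with ``period divides $12\cdot3^{k}$'' rather than as an exact equality.
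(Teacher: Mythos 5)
Your proof is correct and follows essentially the same route as the paper: an induction on $k$ that uses the recurrences of Theorem \ref{mainthm} (repackaged as generating-series identities, exactly the machinery the paper sets up with the Hadamard products and the worked computation of $f^{(2)},g^{(2)}$) to pull level $p\in[3^{k}+1,3^{k+1}]$ back to levels in earlier ranges, with the interleaving over $n\bmod 3$ multiplying the period by $3$. Your explicit treatment of the endpoint index arithmetic (in particular that the $s=0$ formulae only need level $\lfloor P/3\rfloor$, avoiding circularity at $P=3^{k+1}$) is a point the paper leaves implicit, but the argument is the same.
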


\begin{proof}
For $p=0,1,2,3$ by the recurrence formulae of Proposition \ref{prop:1} and
equalities (\ref{fraction}) these two sequences are periodic. Now suppose $%
p\geq 4$, we shall prove by induction on $k$ that $12\cdot 3^{k}$ is a
period if $3^{k}+1\leq p\leq 3^{k+1}.$

By calculation, we can find that the conclusion is true for $k=1$.
Suppose that the conclusion is true for $p\leq 3^{k}.$ We need to show that
the conclusion is true for $3^{k}+1\leq p\leq 3^{k+1}.$ If $p=3q$, then $%
3^{k-1}+1\leq q\leq 3^{k}$, thus by Theorem \ref{mainthm} (1) (2) and (3),
we have
\begin{eqnarray}
|\Gamma _{3n}^{p}| &=&(-1)^{n}|\Gamma _{n}^{q}|^{2}\cdot |\Delta
_{n}^{q}|+(-1)^{n+1}|\Gamma _{n}^{q}|\cdot |\Gamma _{n+1}^{q}|\cdot |\Delta
_{n-1}^{q}|,  \notag \\
|\Gamma _{3n+1}^{p}| &=&(-1)^{n}|\Gamma _{n}^{q}|\cdot |\Gamma
_{n+1}^{q}|\cdot |\Delta _{n}^{q}|+(-1)^{n+1}|\Gamma _{n+2}^{q}|^{2}\cdot
|\Delta _{n-1}^{q}|,  \label{inproof1} \\
|\Gamma _{3n+2}^{p}| &=&(-1)^{n}|\Gamma _{n+1}^{q}|^{2}\cdot |\Delta
_{n}^{q}|.  \notag
\end{eqnarray}%
By the induction hypothesis, all the sequences occurring on the right hand
of (\ref{inproof1}) have period $12\cdot 3^{k-1}$, and so do the product and
sum of these sequences. Therefore, the sequences $\left\vert \Gamma
_{3n}^{p}\right\vert ,\left\vert \Gamma _{3n+1}^{p}\right\vert $ and $%
\left\vert \Gamma _{3n+2}^{p}\right\vert $ are all $12\cdot 3^{k-1}$%
-periodic which implies that the sequence $\left\vert \Gamma
_{n}^{p}\right\vert $ is of period $12\cdot 3^{k}$. The cases $p=3q+1$ and $%
3q+2$ follow in the same way. Similar discussions can be applied to the
sequence $\left\vert \Delta _{n}^{p}\right\vert $.
\end{proof}

\subsection{Automaticity properties.}
First, we will recall some definitions of two dimensional automatic sequences which can be found in \cite[Chapter 14]{ALL}.

Let $\mathcal{A}, \mathcal{B}$ be two finite alphabets. If $$A=(a_{i,j})_{0\leq i\leq m, 0\leq j\leq n}$$ is an $m\times n$ matrix with entries in $\mathcal{A}$, and $\psi: \mathcal{A}\rightarrow \mathcal{B}^{k\times l}$ is a \emph{$[k,l]$-uniform matrix-valued morphism}, i.e., a map sending each letter in $\mathcal{A}$ to an $k\times l$ matrix, then $\psi(A)$ is an $km\times ln$ matrix given by
\begin{equation*}
\left[
\begin{array}{cccc}
\psi(a_{00}) & \psi(a_{01}) & \cdots & \psi(a_{0,n-1})\\
\psi(a_{10}) & \psi(a_{11}) & \cdots & \psi(a_{1,n-1})\\
\vdots & \vdots & \ddots &\vdots\\
\psi(a_{m-1,0}) & \psi(a_{m-1,1}) & \cdots & \psi(a_{m-1,n-1})\\
\end{array}
\right].
\end{equation*}

A \emph{$[k,l]$-automatic sequence} is the image (under a coding) of a fixed point of a $[k,l]$-morphism. In particular, if $k=l$, the $[k,k]$-automatic sequence is also called the $k$-\emph{automatic sequence}. A well known result \cite[Theorem 14.2.2]{ALL}(see also \cite{Sal},\cite{Sal2}) shows that the two-dimensional sequence $\mathbf{u}=(u_{n,m})_{n,m\geq 0}$ is $[k,l]$-automatic sequences if and only if the $[k,l]$-kernel $K_{k,l}(\mathbf{u})$ is finite, where
\[K_{k,l}(\mathbf{u})=\{(u_{k^am+r,l^an+s})_{m,n\geq 0}:a\geq 0, 0\leq r <k^a, 0\leq s <l^a\}.\]

\begin{theorem}
\label{thm:auto}The two-dimensional sequences $($modulo $3)$%
\begin{equation*}
\{\left\vert \Gamma _{n}^{p}\right\vert \}_{n\geq 0,p\geq 0},\{\left\vert
\Delta _{n}^{p}\right\vert \}_{n\geq 0,p\geq 0}
\end{equation*}%
are both $3$-automatic.
\end{theorem}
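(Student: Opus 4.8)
The plan is to prove both statements at once by realizing $\gamma_{n,p}:=|\Gamma_n^p|$ and $\delta_{n,p}:=|\Delta_n^p|$ (reduced modulo $3$) as codings of a single $[3,3]$-uniform morphism, and then to invoke the criterion \cite[Theorem 14.2.2]{ALL} that a two-dimensional sequence is $[3,3]$-automatic exactly when its $[3,3]$-kernel is finite. The eighteen recurrences of Theorem \ref{mainthm}, extended to all $n\geq 0$ by the boundary conventions of Proposition \ref{prop:1}, are precisely the data of such a morphism: each expresses a value $|\Gamma_{3n+i}^{3p+j}|$ or $|\Delta_{3n+i}^{3p+j}|$ (with $i,j\in\{0,1,2\}$) as an $\mathbb{F}_3$-polynomial in the \emph{finitely many} quantities $|\Gamma_{n+a}^{p+b}|,|\Delta_{n+a}^{p+b}|$ with $a\in\{-1,0,1,2\}$, $b\in\{0,1\}$, weighted by the sign $(-1)^n$ or $(-1)^{n+1}$.

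First I would package the local data into one vector-valued sequence over a finite alphabet. Fixing a box $W\subset\mathbb{Z}\times\mathbb{Z}$ of bounded radius $\rho$ (to be enlarged below), set
\[
V_{n,p}:=\Big(\big(|\Gamma_{n+a}^{p+b}|\big)_{(a,b)\in W},\ \big(|\Delta_{n+a}^{p+b}|\big)_{(a,b)\in W},\ (-1)^n\bmod 3\Big),
\]
which takes values in the finite alphabet $\mathcal{B}=\mathbb{F}_3^{2|W|}\times\{1,2\}$. Both target sequences are images of $V$ under obvious codings ($V_{n,p}\mapsto|\Gamma_n^p|$ and $V_{n,p}\mapsto|\Delta_n^p|$), so it suffices to show that $V$ is $[3,3]$-automatic.

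The heart of the argument is to verify that $V_{n,p}$ determines $V_{3n+i,3p+j}$ for all $i,j\in\{0,1,2\}$; granting this, the nine maps $V_{n,p}\mapsto V_{3n+i,3p+j}$ assemble into a $[3,3]$-uniform morphism with fixed point $V$, so $V$ (hence $\gamma,\delta$) is $[3,3]$-automatic. To compute an entry $|\Gamma_{(3n+i)+a'}^{(3p+j)+b'}|$ of $V_{3n+i,3p+j}$, I write $(3n+i)+a'=3(n+c)+i''$ with $i''\in\{0,1,2\}$ and apply the matching formula of Theorem \ref{mainthm}; this rewrites the entry as an $\mathbb{F}_3$-polynomial in values $|\Gamma_{n+a''}^{p+b''}|,|\Delta_{n+a''}^{p+b''}|$ and in $(-1)^{n+c}$, where the shifts $a'',b''$ are bounded by a constant independent of $(n,p)$, since each reduction contracts the $n$-index by a factor $3$ while introducing only bounded additive shifts (inspection of the eighteen formulae shows these never exceed $+2$ in $n$, $-1$ in $n$, and $+1$ in $p$). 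The parity $(-1)^{n+c}$ is recovered from the stored bit $(-1)^n$ and the known integer $c$. Hence, once $\rho$ is large enough that the fine window needed to fill a coarse window of radius $\rho$ again fits inside radius $\rho$ (a self-consistency that holds for all $\rho$ beyond a threshold, because contracting by $3$ dominates the bounded additive shifts), every entry of $V_{3n+i,3p+j}$ is an explicit function of the entries of $V_{n,p}$ alone.

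The main obstacle is the bookkeeping in this closure step: one must check that a \emph{single} finite window closes under all eighteen decimations simultaneously, so that no formula of Theorem \ref{mainthm} ever calls for an argument outside $W$, and that the negative- and zero-index occurrences (notably $|\Delta_{n-1}^p|$ and the $n=0$ row) are absorbed by the conventions of Proposition \ref{prop:1} rather than escaping $\mathcal{B}$. I emphasize that the apparent danger — that iterated substitution might force products of unboundedly many determinants — does not occur, because $V$ records \emph{values} in $\mathbb{F}_3$, not formal sequences, so each recurrence is evaluated as one arithmetic operation in $\mathbb{F}_3$ and the alphabet stays finite. Finiteness of the $[3,3]$-kernel of $V$, and therefore of $\gamma$ and $\delta$, then follows, and the theorem is a direct application of \cite[Theorem 14.2.2]{ALL}. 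The explicit morphism can alternatively be extracted from the decimation-by-$3$ structure exhibited in closed form by the generating-series computation of $f^{(2)},g^{(2)}$ above.
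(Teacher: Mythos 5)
Your proposal is correct and follows essentially the same route as the paper: both arguments augment $|\Gamma_n^p|$ and $|\Delta_n^p|$ with the parity sequence $(-1)^n$ and a fixed finite collection of shifts in both indices, and then use the eighteen recurrences of Theorem \ref{mainthm} together with the boundary conventions of Proposition \ref{prop:1} to show that this finite package of local data is closed under decimation by $3$ in $n$ and $p$. The only difference is packaging: the paper expresses the closure as finiteness of the $3$-kernel (every kernel element is one of finitely many $GF_{3}$-polynomial functions of the shifted sequences, with the commutation relations (\ref{inproof2}) playing the role of your window bookkeeping), while you realize the same data as a $[3,3]$-uniform morphism on window-letters; the two formulations are equivalent by \cite[Theorem 14.2.2]{ALL}.
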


\begin{proof}
For this, we only need to show that the $3$-kernels of these sequences are
finite.

Let $\{u_{n}^{p}\}_{n\geq -1,p\geq 0}$ be a double sequence. For $\alpha^{\prime} \in
\{-1,0,1,2\}$ and $\alpha,\beta,\beta^{\prime} \in \{0,1,2\}$, operations $S_{\alpha^{\prime} }^{\beta^{\prime} }$
and $T_{\alpha }^{\beta }$ are defined as follows%
\begin{equation*}
S_{\alpha^{\prime} }^{\beta^{\prime} }u=\{u_{n+\alpha^{\prime} }^{p+\beta^{\prime} }\}_{n\geq 0,p\geq 0},\
T_{\alpha }^{\beta }u=\{u_{3n+\alpha }^{3p+\beta }\}_{n\geq 0,p\geq 0}.
\end{equation*}%
Then, for $\alpha ^{\prime }\in \{-1,0,1,2\}$ and $\alpha ,\beta ,\beta
^{\prime }\in \{0,1,2\}$, we have%
\begin{equation}
T_{\alpha}^{\beta}S_{\alpha^{\prime}}^{\beta^{\prime}}=\left\{
	\begin{array}{ll}
	S_{-1}^0T_{2}^{\beta+\beta^{\prime}} & \text{ if } \alpha+\alpha^{\prime}=-1 \text{ and } \beta+\beta^{\prime}\leq 2,\\
	S_{-1}^1T_{2}^{\beta+\beta^{\prime}-3} & \text{ if } \alpha+\alpha^{\prime}=-1 \text{ and } \beta+\beta^{\prime}\geq 3,\\
	T_{\alpha+\alpha^{\prime}}^{\beta+\beta^{\prime}} & \text{ if } 0\leq\alpha+\alpha^{\prime}\leq 2 \text{ and } \beta+\beta^{\prime}\leq 2,\\
	S_{0}^1T_{\alpha+\alpha^{\prime}}^{\beta+\beta^{\prime}-3} & \text{ if } 0\leq\alpha+\alpha^{\prime}\leq 2 \text{ and } \beta+\beta^{\prime}\geq 3,\\
	S_{1}^0T_{\alpha+\alpha^{\prime}-3}^{\beta+\beta^{\prime}} & \text{ if } \alpha+\alpha^{\prime}\geq 3 \text{ and } \beta+\beta^{\prime}\leq 2,\\
	S_{1}^1T_{\alpha+\alpha^{\prime}-3}^{\beta+\beta^{\prime}-3} & \text{ if } \alpha+\alpha^{\prime}\geq 3 \text{ and } \beta+\beta^{\prime}\geq 3.
	\end{array}
	\right.  \label{inproof2}
\end{equation}%

Suppose $\Gamma,\Delta$ and $F$ stand for the sequences $\{\left\vert \Gamma
_{n}^{p}\right\vert \}_{n\geq 0,p\geq 0},\{\left\vert \Delta
_{n}^{p}\right\vert \}_{n\geq 0,p\geq 0}$ and $\{F_{n}^{p}\}_{n\geq 0,p\geq
0}$ modulo $3$ where $F_{n}^{p}=(-1)^{n}.$ Thus for any $\beta \in \{0,1,2\}$%
\begin{equation}
T_{0}^{\beta }F=T_{2}^{\beta }F=S_{2}^{\beta }F=F\text{ and }T_{1}^{\beta
}F=S_{1}^{\beta }F=S_{1}^{0}F.  \label{inproof22}
\end{equation}

We rewrite Theorem \ref{mainthm} and Proposition \ref{prop:1} as follows%
\begin{equation}
\left\{
\begin{array}{ll}
T_{0}^{0}\Gamma\equiv F\cdot \Gamma^{2}\cdot \Delta  & T_{0}^{2}\Gamma\equiv F\cdot \left(
S_{0}^{1}\Gamma\right) ^{2}\cdot \Delta , \\
\multicolumn{1}{r}{+S_{1}^{0}F\cdot \Gamma\cdot S_{1}^{0}\Gamma\cdot S_{-1}^{0}\Delta ,
} & \multicolumn{1}{r}{} \\
T_{0}^{0}\Delta \equiv F\cdot \Gamma\cdot \Delta ^{2} & T_{0}^{2}\Delta \equiv
F\cdot S_{0}^{1}\Gamma\cdot \Delta \cdot S_{0}^{1}\Delta  \\
\multicolumn{1}{r}{+S_{1}^{0}F\cdot S_{1}^{0}\Gamma\cdot S_{-1}^{0}\Delta \cdot
\Delta ,} & \multicolumn{1}{r}{+S_{1}^{0}F\cdot S_{1}^{1}\Gamma\cdot \Delta \cdot
S_{-1}^{1}\Delta ,} \\
T_{1}^{0}\Gamma\equiv F\cdot S_{1}^{0}\Gamma\cdot \Gamma\cdot \Delta  & T_{1}^{2}\Gamma\equiv
F\cdot S_{1}^{0}\Gamma\cdot S_{0}^{1}\Gamma\cdot S_{0}^{1}\Delta  \\
\multicolumn{1}{r}{+S_{1}^{0}F\cdot (S_{1}^{0}\Gamma)^{2}\cdot S_{-1}^{0}\Delta ,}
& \multicolumn{1}{r}{+S_{1}^{0}F\cdot S_{1}^{0}\Gamma\cdot S_{1}^{1}\Gamma\cdot
S_{-1}^{1}\Delta ,} \\
T_{1}^{0}\Delta \equiv S_{1}^{0}F\cdot S_{1}^{0}\Gamma\cdot \Delta ^{2}, &
T_{1}^{2}\Delta \equiv F\cdot S_{1}^{0}\Gamma\cdot \left( S_{0}^{1}\Delta \right)
^{2}, \\
T_{2}^{0}\Gamma\equiv F\cdot (S_{1}^{0}\Gamma)^{2}\cdot \Delta , & T_{2}^{2}\Gamma\equiv
S_{1}^{0}F\cdot \left( S_{1}^{1}\Gamma\right) ^{2}\cdot \Delta , \\
T_{2}^{0}\Delta \equiv F\cdot S_{2}^{0}\Gamma\cdot \Delta ^{2} & T_{2}^{2}\Delta
\equiv F\cdot S_{2}^{0}\Gamma\cdot \left( S_{0}^{1}\Delta \right) ^{2}, \\
\multicolumn{1}{r}{+S_{1}^{0}F\cdot S_{1}^{0}\Gamma\cdot \Delta \cdot
S_{1}^{0}\Delta ,} & \multicolumn{1}{r}{} \\
T_{0}^{1}\Gamma\equiv F\cdot S_{0}^{1}\Gamma\cdot \Gamma\cdot \Delta  & T_{2}^{1}\Gamma\equiv
S_{1}^{0}F\cdot \left( S_{1}^{0}\Gamma\right) ^{2}\cdot S_{0}^{1}\Delta , \\
\multicolumn{1}{r}{+S_{1}^{0}F\cdot S_{1}^{0}\Gamma\cdot S_{0}^{1}\Gamma\cdot
S_{-1}^{0}\Delta ,} & \multicolumn{1}{r}{} \\
T_{0}^{1}\Delta \equiv F\cdot S_{0}^{1}\Gamma\cdot \Delta ^{2}, & T_{2}^{1}\Delta
\equiv F\cdot S_{2}^{0}\Gamma\cdot \Delta \cdot S_{0}^{1}\Delta  \\
\multicolumn{1}{r}{} & \multicolumn{1}{r}{+S_{1}^{0}F\cdot S_{1}^{0}\Gamma\cdot
S_{0}^{1}\Delta \cdot S_{1}^{0}\Delta ,} \\
T_{1}^{1}\Gamma\equiv S_{1}^{0}F\cdot \left( S_{1}^{0}\Gamma\right) ^{2}\cdot
S_{-1}^{1}\Delta , & T_{1}^{1}\Delta \equiv F\cdot S_{1}^{1}\Gamma\cdot \Delta
^{2}.%
\end{array}%
\right.   \label{inproof3}
\end{equation}

Let $\mathcal{X}=\{\Gamma,\Delta,F\}$ and $\mathcal{Y}=\{S_{\alpha }^{\beta
}J~| ~J\in \mathcal{X},\alpha=-1,0,1,2 \text{ and }\beta =0,1,2\}.$ According to (\ref{inproof2}) (\ref%
{inproof22}) and (\ref{inproof3}), for any $\alpha ,\beta \in \{0,1,2\}$ and
$J\in \mathcal{Y}$, $T_{\alpha }^{\beta }J$ can be expressed as a polynomial
with coefficients in $GF_{3}$ of the elements of $\mathcal{Y}$. Hence the
elements of $3$-kernels $K_3(\Gamma)$ and $K_3(\Delta)$ are obtained by successive
applications of operators $T_{\alpha }^{\beta }$. For instance, let $(|\Gamma_{3^mn+r}^{3^mp+s}|)_{n,p\geq 0}\in K_3(\Gamma)$ where $m\geq 0$ and $0\geq r,s\geq 3^m$. Suppose $r=\sum_{i=0}^{m-1}3^i\alpha_i$ and $s=\sum_{i=0}^{m-1}3^i\beta_i$ where $\alpha_i,\beta_i\in \{0,1,2\}$. It is easy to verify that  $(|\Gamma_{3^mn+r}^{3^mp+s}|)_{n,p\geq 0}=T_{\alpha_{m-1}}^{\beta_{m-1}}\cdots T_{\alpha_{1}}^{\beta_{1}}T_{\alpha_{0}}^{\beta_{0}}(\Gamma)$. 
Therefore the elements of these two $3$-kernels are polynomials with
coefficients in $GF_{3}$ of the elements of $\mathcal{Y}$.

Since there is only a finite number of polynomial functions on $GF_{3}$ with
twelve variables, these $3$-kernels are finite. Therefore, the
sequences $\{\left\vert \Gamma _{n}^{p}\right\vert \}_{n\geq 0,p\geq 0}$ and
$\{\left\vert \Delta _{n}^{p}\right\vert \}_{n\geq 0,p\geq 0}$ are $3$%
-automatic.
\end{proof}

\begin{corollary}
For any $n\geq 1$, the sequences $($modulo $3)$%
\begin{equation*}
\{\left\vert \Gamma _{n}^{p}\right\vert \}_{p\geq 0},\{\left\vert \Delta
_{n}^{p}\right\vert \}_{p\geq 0}
\end{equation*}%
are both $3$-automatic.
\end{corollary}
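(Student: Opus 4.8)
The plan is to deduce the corollary from Theorem~\ref{thm:auto} via the general principle that a one-dimensional slice of a two-dimensional automatic sequence is automatic. I would verify this directly through the kernel characterization, so that no external slicing lemma is invoked. Recall that a one-dimensional sequence $(v_p)_{p\ge 0}$ over a finite alphabet is $3$-automatic if and only if its $3$-kernel $\{(v_{3^a p+s})_{p\ge 0}: a\ge 0,\ 0\le s<3^a\}$ is finite (the classical criterion, see \cite{ALL}), and that by Theorem~\ref{thm:auto} the $[3,3]$-kernel
\[
K_{3,3}(\Gamma)=\{(|\Gamma_{3^a m+r}^{3^a p+s}|)_{m,p\ge 0}: a\ge 0,\ 0\le r,s<3^a\}
\]
of the double sequence $\Gamma$ is finite, and likewise for $\Delta$.

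Fix $n\ge 1$ and set $v_p=|\Gamma_n^p|$; let $N$ be the least integer with $n<3^N$. To bound the $3$-kernel of $(v_p)_{p\ge 0}$ I would split its members $(v_{3^a p+s})_p=(|\Gamma_n^{3^a p+s}|)_p$ according to the size of $a$. For $a\ge N$ we have $0\le n<3^a$, so taking $r=n$ shows that $(|\Gamma_{3^a m+n}^{3^a p+s}|)_{m,p}$ is a member of the finite kernel $K_{3,3}(\Gamma)$; evaluating this double sequence at $m=0$ recovers exactly $(|\Gamma_n^{3^a p+s}|)_p=(v_{3^a p+s})_p$. Hence for every $a\ge N$ the corresponding kernel element of $(v_p)$ is the ``$m=0$ row'' of one of the finitely many elements of $K_{3,3}(\Gamma)$, so at most $|K_{3,3}(\Gamma)|$ distinct sequences arise this way. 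The remaining pairs $(a,s)$ with $a<N$ number only $\sum_{a=0}^{N-1}3^a=(3^N-1)/2$, contributing finitely many further sequences.

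Combining the two ranges shows that the $3$-kernel of $(|\Gamma_n^p|)_{p\ge 0}$ is finite, whence this sequence is $3$-automatic; the identical argument applied to $\Delta$ and $K_{3,3}(\Delta)$ handles $(|\Delta_n^p|)_{p\ge 0}$. The one point that needs care — and the only place where the argument could go wrong if handled naively — is that the ambient row index $\lfloor n/3^a\rfloor$ varies with $a$, so a crude bound would leave infinitely many candidate slices sitting in \emph{different} rows. This is exactly why I single out the threshold $a\ge N$: once $3^a$ exceeds $n$ the row index collapses to $0$, pinning all large-$a$ slices to the $m=0$ rows of the finite set $K_{3,3}(\Gamma)$ and thereby forcing the count to be finite.
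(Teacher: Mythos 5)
Your argument is correct, and it differs from the paper's in an instructive way. The paper disposes of this corollary in one line by citing Salon's theorem that any row (or column) of a $3$-automatic two-dimensional sequence is itself $3$-automatic, treating that slicing principle as a black box. You instead prove the slicing principle directly from the kernel characterizations: you correctly identify that the only delicate point is that the ambient row index $\lfloor n/3^a \rfloor$ of the slice moves around for small $a$, and you neutralize it by choosing the threshold $N$ with $n<3^N$, after which $r=n$ is an admissible residue and every kernel element $(|\Gamma_n^{3^ap+s}|)_{p\ge 0}$ with $a\ge N$ is the $m=0$ row of one of the finitely many elements of $K_{3,3}(\Gamma)$; the finitely many pairs $(a,s)$ with $a<N$ are then absorbed by brute force. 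This is exactly the content of the lemma the paper cites, so nothing new is proved, but your version is self-contained and makes visible why the lemma is true, whereas the paper's version is shorter and defers to the literature. Both are complete proofs; the citation route generalizes immediately to $[k,l]$-automatic sequences and arbitrary slices, while your kernel argument has the pedagogical advantage of not requiring the reader to consult Salon's papers.
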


\begin{proof}
An immediate consequence of Salon \cite{Sal}, \cite{Sal2} is that, if a
two-dimensional sequence $\{s_{m,n}\}_{m,n\geq 0}$ is 3-automatic, then for
any fixed $m\geq 0$ the sequence $\{s_{m,n}\}_{n\geq 0}$ is 3-automatic
which prove our result.
\end{proof}

\section{Applications.}
\subsection{Pad\'{e} approximation}
Now, consider again the Cantor sequence
$$\boldsymbol{c}=c_{0}c_{1}c_{2}\cdots \in \{0,1\}^{\mathbb{N}},$$
let
\begin{equation*}
f(x)=\sum_{n\geq 0}c_{n}x^{n}
\end{equation*}%
be the generating function of the Cantor sequence. It follows from (\ref%
{cantor}) that
\begin{equation}
f(x)=(1+x^{2})f(x^{3}),  \label{eqn:f}
\end{equation}%
and $f(x)>1$ for any $x>0$.

Denote by $\left[ \frac{p}{q}\right] _{f}$, a $(p,q)$-order Pad\'{e}
approximate of $f$, i.e., a rational function $P(x)/Q(x)$ whose denominator
has degree $q$ and whose numerator has degree $p$ such that
\begin{equation*}
f(x)-\frac{P(x)}{Q(x)}=O(x^{p+q+1}),\ x\longrightarrow 0.
\end{equation*}%
A classical result \cite[Brezinski, Page 35]{Bre}, shows us that if $\Gamma_{n}^{0}\neq 0$, then the Pad\'{e}
approximate $\left[ \frac{n-1}{n}\right] _{f}$ exists. Moreover,
\begin{equation}
f(x)-\left[ n-1/n\right] _{f}(x)=\frac{\Gamma_{n+1}^{0}}{\Gamma_{n}^{0}}%
x^{2n}+O(x^{2n+1}).  \label{eqn:pade}
\end{equation}%
Hence by Proposition \ref{prop:p0}, we have the following theorem.

\begin{theorem}
\label{thm:pade}Let $f(x)=\sum_{n\geq 0}c_{n}x^{n}$, then for any $n\geq 1$,
the $(n-1,n)$-order Pad\'{e} approximate of $f$ exists.
\end{theorem}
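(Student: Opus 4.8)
The plan is to deduce the theorem directly from the classical existence criterion quoted just above, namely Brezinski's result that the Pad\'{e} approximate $\left[n-1/n\right]_{f}$ exists whenever $\Gamma_{n}^{0}\neq 0$. Thus it suffices to verify that the Hankel determinant $\Gamma_{n}^{0}$ does not vanish for every $n\geq 1$, and then to read off the error term from (\ref{eqn:pade}).

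First I would observe that each entry of $\Gamma_{n}^{0}$ is a term $c_{k}$ of the Cantor sequence, so $|\Gamma_{n}^{0}|$ is an \emph{integer}. Consequently, in order to prove $|\Gamma_{n}^{0}|\neq 0$ it is enough to show that $|\Gamma_{n}^{0}|\not\equiv 0\pmod{3}$, since an integer that is nonzero modulo $3$ is certainly nonzero. This reduces the whole question to a statement about residues modulo $3$, which has already been settled: Proposition \ref{prop:p0} gives the exact value of $|\Gamma_{n}^{0}|\pmod{3}$, equal to $1$ when $n\equiv 1,2\pmod{4}$ and to $2$ when $n\equiv 3,0\pmod{4}$. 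In either case $|\Gamma_{n}^{0}|\equiv 1$ or $2$, hence $|\Gamma_{n}^{0}|\not\equiv 0\pmod{3}$, and therefore $\Gamma_{n}^{0}\neq 0$ for all $n\geq 1$. Invoking the classical criterion then yields the existence of $\left[n-1/n\right]_{f}$ for every $n\geq 1$.

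There is essentially no genuine obstacle remaining at this stage: all of the combinatorial difficulty has been absorbed into Proposition \ref{prop:p0}, which in turn rests on the recurrence formulae of Theorem \ref{mainthm} and the periodicity expressed by (\ref{fraction}). The only points that require minor care are the elementary passage from ``nonzero modulo $3$'' to ``nonzero as an integer,'' and the check that the base case $n=1$, where $|\Gamma_{1}^{0}|=c_{0}=1$, is indeed covered by the case $n\equiv 1\pmod{4}$ of Proposition \ref{prop:p0}; both are immediate, so the theorem follows.
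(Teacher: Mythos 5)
Your proposal is correct and is essentially identical to the paper's own argument: the paper likewise invokes Brezinski's criterion together with Proposition \ref{prop:p0} to conclude that $|\Gamma_{n}^{0}|\not\equiv 0\pmod 3$, hence $|\Gamma_{n}^{0}|\neq 0$, for all $n\geq 1$. Your spelled-out passage from ``nonzero modulo $3$'' to ``nonzero as an integer'' is exactly the (implicit) content of the paper's one-line proof.
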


\subsection{The irrationality exponent of the Cantor number}
Let $\xi$ be an irrational number. The \emph{irrationality exponent} (or \emph{irrational measure})
$\mu(\xi)$ of $\xi$ is defined as follow
\[\mu(\xi)=\sup\left\{\mu\in\mathbb{R}: \left|\xi-\frac{p}{q}\right|<\frac{1}{q^{\mu}} \text{ for infinite many } (p,q)\in \mathbb{Z}\times\mathbb{N}\right\}.\]

\medskip
Let $\xi_{\mathbf{c},b}$ be the \emph{Cantor number} defined by
\[\xi_{\mathbf{c},b}=\sum_{k\geq 0}\frac{c_k}{b^k}=1+\frac{1}{b^2}+\frac{1}{b^6}+\frac{1}{b^8}+\cdots,\]
where $\mathbf{c}=c_0c_1c_2\cdots$ is the Cantor sequence. Combine equation (\ref{eqn:f}), Propsition \ref{prop:1} and Theorem 1 in \cite{GWW}, we have
\begin{proposition}
\label{irrationality}For any integer $b\geq 2$, the irrationality exponent
of the Cantor number $\xi _{\boldsymbol{c},b}$ is equal to $2$.
\end{proposition}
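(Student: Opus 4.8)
The plan is to sandwich $\mu:=\mu(\xi_{\mathbf{c},b})$ between $2$ and $2$. The lower bound $\mu\geq 2$ is free: the Cantor sequence is not eventually periodic (it is a genuine aperiodic $3$-automatic sequence), so its $b$-ary expansion is non-periodic and $\xi_{\mathbf{c},b}$ is irrational; every irrational real has irrationality exponent at least $2$ by Dirichlet's theorem. All the substance lies in the upper bound $\mu\leq 2$.

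For the upper bound I would manufacture a \emph{dense} family of rational approximations out of the Pad\'e approximants. By Theorem \ref{thm:pade} the approximant $[n-1/n]_f=P_n/Q_n$ exists for every $n\geq 1$, with $\deg P_n=n-1$, $\deg Q_n=n$; since the $c_k$ are integers, $P_n$ and $Q_n$ may be taken with integer coefficients. Setting $x=1/b$ and clearing denominators, put $q_n=b^nQ_n(1/b)$ and $p_n=b^nP_n(1/b)$, both integers. The error formula (\ref{eqn:pade}) then gives
\[
\left|\xi_{\mathbf{c},b}-\frac{p_n}{q_n}\right|=\left|f(1/b)-\frac{P_n(1/b)}{Q_n(1/b)}\right|\asymp \frac{|\Gamma_{n+1}^0|}{|\Gamma_n^0|}\,b^{-2n}.
\]
Hence, provided $q_n\asymp b^n$ and the ratio $|\Gamma_{n+1}^0|/|\Gamma_n^0|$ stays bounded, these are approximations of quadratic quality, $|\xi_{\mathbf{c},b}-p_n/q_n|\ll q_n^{-2}$.

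Two structural inputs then feed the standard Diophantine lemma. First, Proposition \ref{prop:p0} shows $|\Gamma_n^0|\not\equiv 0\pmod 3$, hence $\Gamma_n^0\neq 0$, so the approximants exist for \emph{every} consecutive $n$ (no gaps). Second, the functional equation (\ref{eqn:f}), namely $f(x)=(1+x^2)f(x^3)$, supplies the self-similar recursion controlling denominator growth: it ties $Q_{3n},Q_{3n+1},Q_{3n+2}$ back to $Q_n$ and lets one verify $q_{n+1}\leq C\,q_n$, i.e.\ consecutive denominators are comparable. With (i) $|\xi_{\mathbf{c},b}-p_n/q_n|\leq C_1 q_n^{-2}$ and (ii) $q_{n+1}\leq C_2 q_n$, the classical lemma that a real number admitting a family of consecutive quadratic approximations has irrationality exponent exactly $2$ forces $\mu\leq 2$. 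This is precisely the mechanism packaged in Theorem 1 of \cite{GWW}, which I would invoke after verifying its hypotheses through Proposition \ref{prop:1} and (\ref{eqn:f}), thereby closing the sandwich $2\leq\mu\leq 2$.

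The main obstacle is the genuine archimedean size control behind (i) and (ii): Proposition \ref{prop:p0} only records $|\Gamma_n^0|$ and, implicitly, $|Q_n(1/b)|$ modulo $3$, whereas the Diophantine estimate needs their true magnitudes. Concretely one must show $|Q_n(1/b)|$ is bounded above and below (so that $q_n\asymp b^n$ and $q_{n+1}/q_n$ is bounded) and that the error constant $|\Gamma_{n+1}^0|/|\Gamma_n^0|$ does not blow up with $n$. The functional equation is the lever here, since iterating $f(x)=(1+x^2)f(x^3)$ descends the index by a factor of $3$ and, combined with the non-vanishing of the Hankel determinants, pins down the growth of the Pad\'e denominators; it is exactly this comparability that Theorem 1 of \cite{GWW} is designed to exploit.
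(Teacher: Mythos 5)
Your proposal is correct and follows essentially the same route as the paper: the paper's entire proof consists of invoking Theorem 1 of \cite{GWW} after noting the functional equation (\ref{eqn:f}) and the non-vanishing of the Hankel determinants $|\Gamma_n^0|$ guaranteed by Proposition \ref{prop:p0} (via Proposition \ref{prop:1}), which is exactly the Pad\'e-approximant mechanism you describe. Your write-up is in fact more explicit than the paper about what the cited theorem does internally, and you correctly identify the archimedean size control of the denominators as the part that is delegated to \cite{GWW}.
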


\begin{corollary}
For any $b\geq 2,$ let $\eta _{\mathbf{d },b}=\sum_{n\geq 0}d _{n}b^{-n}$.
The irrationality exponent of $\eta _{\mathbf{d },b}$ is equal to $2$.
\end{corollary}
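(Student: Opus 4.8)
The plan is to reduce the corollary to Proposition \ref{irrationality} by exhibiting $\eta_{\mathbf{d},b}$ as a rational affine image of the Cantor number $\xi_{\mathbf{c},b}$. First I would unfold the definition $d_n=c_n+c_{n+2}$ and split the defining series: since $c_0=1$ and $c_1=0$,
\begin{equation*}
\eta_{\mathbf{d},b}=\sum_{n\geq 0}\frac{c_n}{b^n}+b^2\sum_{n\geq 0}\frac{c_{n+2}}{b^{n+2}}=\xi_{\mathbf{c},b}+b^2\left(\xi_{\mathbf{c},b}-1\right)=(1+b^2)\,\xi_{\mathbf{c},b}-b^2.
\end{equation*}
Thus $\eta_{\mathbf{d},b}$ and $\xi_{\mathbf{c},b}$ differ by the affine map $t\mapsto (1+b^2)t-b^2$, whose coefficients are integers and whose inverse $t\mapsto (t+b^2)/(1+b^2)$ again has rational coefficients. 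In particular $\eta_{\mathbf{d},b}$ is irrational because $\xi_{\mathbf{c},b}$ is.

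The second step is to invoke the invariance of the irrationality exponent under such maps, which I would argue by hand. A reduced fraction $p/q$ with $|\xi_{\mathbf{c},b}-p/q|<q^{-\mu}$ produces the fraction $((1+b^2)p-b^2q)/q$ of denominator $q$ approximating $\eta_{\mathbf{d},b}$ with error $(1+b^2)\,|\xi_{\mathbf{c},b}-p/q|<(1+b^2)q^{-\mu}$, which beats $q^{-\mu'}$ for every $\mu'<\mu$ once $q$ is large; hence $\mu(\eta_{\mathbf{d},b})\geq \mu(\xi_{\mathbf{c},b})$. Applying the same reasoning to the inverse map (now the denominator is multiplied by the fixed factor $1+b^2$) yields the reverse inequality, so $\mu(\eta_{\mathbf{d},b})=\mu(\xi_{\mathbf{c},b})$. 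By Proposition \ref{irrationality} the latter equals $2$, which is the claim.

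The computation and the invariance are both elementary, so there is no serious obstacle here; the only points needing care are that the affine map be invertible over $\mathbb{Q}$ (so that denominators change only by the bounded factor $1+b^2$) and that $\eta_{\mathbf{d},b}$ be genuinely irrational, both of which are immediate. As an alternative route that stays within the machinery of the paper, one could mirror the proof of Proposition \ref{irrationality} directly for $g(x)=\sum_{n\geq 0}d_nx^n$: equation (\ref{eqn:f}) gives a functional equation for $g$, and Proposition \ref{prop:p0} shows $|\Delta_n^0|\not\equiv 0$, so the $(n-1,n)$-order Pad\'e approximants of $g$ exist for all $n$ and the estimate (\ref{eqn:pade}) again pins the exponent at $2$; but the affine shortcut is shorter and makes the conclusion transparent.
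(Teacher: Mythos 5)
Your proof is correct and follows the same overall strategy as the paper: write $\eta_{\mathbf{d},b}$ as a rational affine image of a Cantor number, then combine the invariance of the irrationality exponent under such maps with Proposition \ref{irrationality}. The decomposition, however, is genuinely different. You use the defining relation $d_n=c_n+c_{n+2}$ directly, which yields $\eta_{\mathbf{d},b}=(1+b^2)\,\xi_{\mathbf{c},b}-b^2$ in the \emph{same} base $b$; the paper instead regroups the series in blocks of three via (\ref{delta}) (using $d_{3n}=2c_n$, $d_{3n+1}=c_{n+1}$, $d_{3n+2}=c_n$) and arrives at an affine function of $\xi_{\mathbf{c},b^3}$. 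The two identities are consistent with each other through the functional equation $f(x)=(1+x^2)f(x^3)$, and either one suffices because Proposition \ref{irrationality} holds for every integer base $\geq 2$, in particular for $b^3$. Your route is slightly leaner, since it avoids the ternary regrouping, and you also supply the elementary two-sided verification that a rational affine map with nonzero rational slope preserves the irrationality exponent, which the paper merely cites as a known fact; the one point that genuinely needs care there, namely that the inverse map inflates denominators only by the bounded factor $1+b^2$, is exactly the point you flag. Your concluding remark about an alternative argument via Pad\'e approximants of $g(x)=\sum_{n\geq 0}d_nx^n$ and the nonvanishing of $|\Delta_n^0|$ from Proposition \ref{prop:p0} is also a viable (if longer) path, but it is not needed.
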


\begin{proof}
Using (\ref{delta}), we have
\begin{eqnarray*}
\eta _{\mathbf{d },b} &=&\sum_{n\geq 0}d _{n}b^{-n}=\sum_{n\geq 0}(d _{3n}+d
_{3n+1}b^{-1}+d _{3n+2}b^{-2})b^{-3n} \\
&=&\sum_{n\geq 0}(2c_{n}+c_{n+1}b^{-1}+c_{n}b^{-2})b^{-3n} \\
&=&2\xi _{\boldsymbol{c},b^{3}}+b^{-1}(\xi _{\boldsymbol{c}%
,b^{3}}-1)+b^{-2}\xi _{\boldsymbol{c},b^{3}} \\
&=&(2+b^{-1}+b^{-2})\xi _{\boldsymbol{c},b^{3}}-b^{-1}.
\end{eqnarray*}%
Since the irrationality exponent is invariant under multiplication and
addition of a rational number, we can deduce from Proposition \ref{irrationality}
that the irrationality exponent of $\eta _{\mathbf{d },b}$ is equal to $2.$
\end{proof}

\appendix
\section{Proof of Theorem \ref{mainthm} : Continue.}
\begin{proof}
$4)$ Combine (\ref{kc}) and (\ref{h3n}), we have
\begin{eqnarray*}
|P^{t}\Gamma _{3n}^{3p+1}P| &=&\left\vert
\begin{matrix}
\mathbf{0}_{n\times n} & \Gamma _{n}^{p} & \Gamma _{n}^{p+1} \\
\Gamma _{n}^{p} & \Gamma _{n}^{p+1} & \mathbf{0}_{n\times n} \\
\Gamma _{n}^{p+1} & \mathbf{0}_{n\times n} & \Gamma _{n}^{p+1}%
\end{matrix}%
\right\vert  \\
&=&\left\vert
\begin{matrix}
-\Gamma _{n}^{p+1} & \Gamma _{n}^{p} & \mathbf{0}_{n\times n} \\
\Gamma _{n}^{p} & \Gamma _{n}^{p+1} & \mathbf{0}_{n\times n} \\
\mathbf{0}_{n\times n} & \mathbf{0}_{n\times n} & \Gamma _{n}^{p+1}%
\end{matrix}%
\right\vert .
\end{eqnarray*}%
Since
\begin{equation*}
\left\vert
\begin{matrix}
-\Gamma _{n}^{p+1} & \Gamma _{n}^{p} \\
\Gamma _{n}^{p} & \Gamma _{n}^{p+1}%
\end{matrix}%
\right\vert =\left\vert
\begin{matrix}
\Gamma _{n}^{p} & \Gamma _{n}^{p+1} \\
\Gamma _{n}^{p+1} & -\Gamma _{n}^{p}%
\end{matrix}%
\right\vert ,
\end{equation*}%
by Lemma \ref{lem:matirx},
\begin{eqnarray*}
|\Gamma _{3n}^{3p+1}| &=&|\Gamma _{n}^{p+1}|\cdot \left\vert
\begin{matrix}
\Gamma _{n}^{p} & \Gamma _{n}^{p+1} \\
\Gamma _{n}^{p+1} & -\Gamma _{n}^{p}%
\end{matrix}%
\right\vert  \\
&=&(-1)^{n}|\Gamma _{n}^{p+1}|\cdot |\Gamma _{n}^{p}|\cdot |\Delta
_{n}^{p}|+(-1)^{n+1}|\Gamma _{n}^{p+1}|\cdot |\Gamma _{n+1}^{p}|\cdot
|\Delta _{n-1}^{p}|.
\end{eqnarray*}

$5)$ Combine (\ref{kc}) and (\ref{h3n+1}), we have
\begin{equation*}
|P^{t}\Gamma _{3n+1}^{3p+1}P|=\left\vert
\begin{array}{ccc}
\mathbf{0}_{(n+1)\times (n+1)} & (\Gamma _{n+1}^{p})^{(n+1)} & (\Gamma
_{n+1}^{p+1})^{(n+1)} \\
(\Gamma _{n+1}^{p})_{(n+1)} & \Gamma _{n}^{p+1} & \mathbf{0}_{n\times n} \\
(\Gamma _{n+1}^{p+1})_{(n+1)} & \mathbf{0}_{n\times n} & \Gamma _{n}^{p+1}%
\end{array}%
\right\vert .
\end{equation*}%
Recall that $\alpha _{p}^{n}$ is the column vector of the form $%
(c_{p},c_{p+1},\cdots ,c_{p+n-1})^{t}$, then
\begin{eqnarray*}
|P^{t}\Gamma _{3n+1}^{3p+1}P| &=&\left\vert
\begin{array}{ccc}
\mathbf{0}_{(n+1)\times (n+1)} & (\Gamma _{n+1}^{p})^{(n+1)} & (\Gamma
_{n+1}^{p+1})^{(n+1)} \\
(\Gamma _{n+1}^{p})_{(n+1)} & \Gamma _{n}^{p+1} & \mathbf{0}_{n\times n} \\
\mathbf{0}_{(n-1)\times (n+1)} & -(\Gamma _{n}^{p+1})_{(1)} & (\Gamma
_{n}^{p+1})_{(n)} \\
(\alpha _{p+n}^{n+1})^{t} & \mathbf{0}_{1\times n} & (\alpha _{p+n}^{n})^{t}%
\end{array}%
\right\vert  \\
&=&\left\vert
\begin{array}{c}
(\Gamma _{n+1}^{p})_{(n+1)} \\
(\alpha _{p+n}^{n+1})^{t}%
\end{array}%
\right\vert \cdot \left\vert
\begin{array}{cc}
(\Gamma _{n+1}^{p})^{(n+1)} & (\Gamma _{n+1}^{p+1})^{(n+1)} \\
-(\Gamma _{n}^{p+1})_{(1)} & (\Gamma _{n}^{p+1})_{(n)}%
\end{array}%
\right\vert  \\
&=&\left\vert \Gamma _{n+1}^{p}\right\vert \cdot \left\vert
\begin{array}{cc}
(\Gamma _{n+1}^{p})^{(n+1)} & (\Gamma _{n+1}^{p+1})^{(n+1)} \\
-(\Gamma _{n}^{p+1})_{(1)} & (\Gamma _{n}^{p+1})_{(n)}%
\end{array}%
\right\vert  \\
&=&\left\vert \Gamma _{n+1}^{p}\right\vert \cdot \left\vert
\begin{array}{ccc}
(\Gamma _{n+1}^{p})^{(n+1)} & 0_{(n+1)\times (n-1)} & \alpha _{p+n}^{n+1} \\
-(\Gamma _{n}^{p+1})_{(1)} & \Gamma _{n-1}^{p+1}+\Gamma _{n-1}^{p+3} &
\alpha _{p+n}^{n-1}%
\end{array}%
\right\vert  \\
&=&\left\vert \Gamma _{n+1}^{p}\right\vert \cdot (-1)^{n+1}\cdot \left\vert
\begin{array}{cc}
(\Gamma _{n+1}^{p})^{(n+1)} & \alpha _{p+n}^{n+1}%
\end{array}%
\right\vert \cdot \left\vert \Gamma _{n-1}^{p+1}+\Gamma
_{n-1}^{p+3}\right\vert  \\
&=&(-1)^{n+1}\left\vert \Gamma _{n+1}^{p}\right\vert ^{2}\cdot |\Gamma
_{n-1}^{p+1}+\Gamma _{n-1}^{p+3}|.
\end{eqnarray*}%
Therefore,
\begin{equation*}
|\Gamma _{3n+1}^{3p+1}|=(-1)^{n+1}|\Gamma _{n+1}^{p}|^{2}\cdot |\Delta
_{n-1}^{p+1}|.
\end{equation*}

$6)$ Combine (\ref{kc}) and (\ref{h3n+2}), we have
\begin{eqnarray*}
|P^{t}\Gamma _{3n+2}^{3p+1}P| &=&\left\vert
\begin{matrix}
\mathbf{0}_{(n+1)\times (n+1)} & \Gamma _{n+1}^{p} & (\Gamma
_{n+1}^{p+1})^{(n+1)} \\
\Gamma _{n+1}^{p} & \Gamma _{n+1}^{p+1} & \mathbf{0}_{(n+1)\times n} \\
(\Gamma _{n+1}^{p+1})_{(n+1)} & \mathbf{0}_{n\times (n+1)} & \Gamma
_{n}^{p+1}%
\end{matrix}%
\right\vert  \\
&=&\left\vert
\begin{matrix}
\mathbf{0}_{(n+1)\times (n+1)} & \Gamma _{n+1}^{p} & \mathbf{0}_{(n+1)\times
n} \\
\Gamma _{n+1}^{p} & \Gamma _{n+1}^{p+1} & -(\Gamma _{n+1}^{p+1})^{(1)} \\
(\Gamma _{n+1}^{p+1})_{(n+1)} & \mathbf{0}_{n\times (n+1)} & \Gamma
_{n}^{p+1}%
\end{matrix}%
\right\vert  \\
&=&(-1)^{n+1}\left\vert \Gamma _{n+1}^{p}\right\vert \cdot \left\vert
\begin{matrix}
\Gamma _{n+1}^{p} & -(\Gamma _{n+1}^{p+1})^{(1)} \\
(\Gamma _{n+1}^{p+1})_{(n+1)} & \Gamma _{n}^{p+1}%
\end{matrix}%
\right\vert  \\
&=&(-1)^{n+1}\left\vert \Gamma _{n+1}^{p}\right\vert \cdot \left\vert
\begin{matrix}
\Gamma _{n+1}^{p} & -(\Gamma _{n+1}^{p+1})^{(1)} \\
\mathbf{0}_{n\times (n+1)} & \Gamma _{n}^{p+1}+\Gamma _{n}^{p+3}%
\end{matrix}%
\right\vert  \\
&=&(-1)^{n+1}\left\vert \Gamma _{n+1}^{p}\right\vert ^{2}\cdot \left\vert
\Delta _{n}^{p+1}\right\vert .
\end{eqnarray*}%
Therefore,
\begin{equation*}
|\Gamma _{3n++2}^{3p+1}|=(-1)^{n+1}\left\vert \Gamma _{n+1}^{p}\right\vert
^{2}\cdot \left\vert \Delta _{n}^{p+1}\right\vert .
\end{equation*}

$7)$ Combine (\ref{kc}) and (\ref{h3n}), we have
\begin{eqnarray*}
|P^{t}\Gamma _{3n}^{3p+2}P| &=&\left\vert
\begin{matrix}
\Gamma _{n}^{p} & \Gamma _{n}^{p+1} & \mathbf{0}_{n\times n} \\
\Gamma _{n}^{p+1} & \mathbf{0}_{n\times n} & \Gamma _{n}^{p+1} \\
\mathbf{0}_{n\times n} & \Gamma _{n}^{p+1} & \Gamma _{n}^{p+2}%
\end{matrix}%
\right\vert  \\
&=&\left\vert
\begin{matrix}
\Gamma _{n}^{p} & \Gamma _{n}^{p+1} & \mathbf{0}_{n\times n} \\
\mathbf{0}_{n\times n} & \mathbf{0}_{n\times n} & \Gamma _{n}^{p+1} \\
-\Gamma _{n}^{p+2} & \Gamma _{n}^{p+1} & \Gamma _{n}^{p+2}%
\end{matrix}%
\right\vert  \\
&=&\left\vert
\begin{matrix}
\Gamma _{n}^{p} & \Gamma _{n}^{p+1} & \mathbf{0}_{n\times n} \\
\mathbf{0}_{n\times n} & \mathbf{0}_{n\times n} & \Gamma _{n}^{p+1} \\
-\Gamma _{n}^{p+2}-\Gamma _{n}^{p} & \mathbf{0}_{n\times n} & \Gamma
_{n}^{p+2}%
\end{matrix}%
\right\vert  \\
&=&(-1)^{n}|\Gamma _{n}^{p+1}|^{2}\cdot |\Gamma _{n}^{p}+\Gamma _{n}^{p+2}|.
\end{eqnarray*}%
Therefore,
\begin{equation*}
|\Gamma _{3n}^{3p+2}|=(-1)^{n}|\Gamma _{n}^{p+1}|^{2}\cdot |\Delta _{n}^{p}|.
\end{equation*}

$8)$ Combine (\ref{kc}) and (\ref{h3n+1}), we have
\begin{eqnarray*}
|P^{t}\Gamma _{3n+1}^{3p+2}P| &=&\left\vert
\begin{array}{ccc}
\Gamma _{n+1}^{p} & (\Gamma _{n+1}^{p+1})^{(n+1)} & \mathbf{0}_{(n+1)\times
n} \\
(\Gamma _{n+1}^{p+1})_{(n+1)} & \mathbf{0}_{n\times n} & \Gamma _{n}^{p+1}
\\
\mathbf{0}_{n\times (n+1)} & \Gamma _{n}^{p+1} & \Gamma _{n}^{p+2}%
\end{array}%
\right\vert  \\
&=&\left\vert
\begin{array}{ccc}
\Gamma _{n+1}^{p} & \mathbf{0}_{(n+1)\times n} & \mathbf{0}_{(n+1)\times n}
\\
(\Gamma _{n+1}^{p+1})_{(n+1)} & -\Gamma _{n}^{p+2} & \Gamma _{n}^{p+1} \\
\mathbf{0}_{n\times (n+1)} & \Gamma _{n}^{p+1} & \Gamma _{n}^{p+2}%
\end{array}%
\right\vert  \\
&=&|\Gamma _{n+1}^{p}|\cdot \left\vert
\begin{array}{cc}
-\Gamma _{n}^{p+2} & \Gamma _{n}^{p+1} \\
\Gamma _{n}^{p+1} & \Gamma _{n}^{p+2}%
\end{array}%
\right\vert  \\
&=&|\Gamma _{n+1}^{p}|\cdot \left\vert
\begin{array}{c}
\Gamma _{n}^{p+1}\Gamma _{n}^{p+2} \\
\Gamma _{n}^{p+2}-\Gamma _{n}^{p+1}%
\end{array}%
\right\vert .
\end{eqnarray*}%
By Lemma \ref{lem:matirx},
\begin{equation*}
|\Gamma _{3n+1}^{3p+2}|=(-1)^{n}|\Gamma _{n+1}^{p}|\cdot |\Gamma
_{n}^{p+1}|\cdot |\Delta _{n}^{p+1}|+(-1)^{n+1}|\Gamma _{n+1}^{p}|\cdot
|\Gamma _{n+1}^{p+1}|\cdot |\Delta _{n-1}^{p+1}|.
\end{equation*}

$9)$ Combine (\ref{kc}) and (\ref{h3n+2}), we have
\begin{eqnarray*}
|P^{t}\Gamma _{3n+2}^{3p+2}P| &=&\left\vert
\begin{array}{ccc}
\Gamma _{n+1}^{p} & \Gamma _{n+1}^{p+1} & \mathbf{0}_{(n+1)\times n} \\
\Gamma _{n+1}^{p+1} & \mathbf{0}_{(n+1)\times (n+1)} & (\Gamma
_{n+1}^{p+1})^{(n+1)} \\
\mathbf{0}_{n\times (n+1)} & (\Gamma _{n+1}^{p+1})_{(n+1)} & \Gamma
_{n}^{p+2}%
\end{array}%
\right\vert  \\
&=&\left\vert
\begin{array}{ccc}
\Gamma _{n+1}^{p} & \Gamma _{n+1}^{p+1} & -(\Gamma _{n+1}^{p})^{(n+1)} \\
(\Gamma _{n+1}^{p+1})_{(n+1)} & \mathbf{0}_{(n+1)\times (n+1)} & \mathbf{0}%
_{(n+1)\times n} \\
\mathbf{0}_{n\times (n+1)} & \Gamma _{n}^{p+1} & \Gamma _{n}^{p+2}%
\end{array}%
\right\vert  \\
&=&\left\vert
\begin{array}{ccc}
\Gamma _{n+1}^{p} & \Gamma _{n+1}^{p+1} & -(\Gamma _{n+1}^{p})^{(n+1)} \\
(\Gamma _{n+1}^{p+1})_{(n+1)} & \mathbf{0}_{(n+1)\times (n+1)} & \mathbf{0}%
_{(n+1)\times n} \\
-(\Gamma _{n+1}^{p})_{(n+1)} & \mathbf{0}_{n\times (n+1)} & \Gamma
_{n}^{p}+\Gamma _{n}^{p+2}%
\end{array}%
\right\vert  \\
&=&(-1)^{n+1}|\Gamma _{n+1}^{p+1}|\cdot \left\vert
\begin{array}{cc}
\Gamma _{n+1}^{p+1} & -(\Gamma _{n+1}^{p})^{(n+1)} \\
\mathbf{0}_{n\times (n+1)} & \Gamma _{n}^{p}+\Gamma _{n}^{p+2}%
\end{array}%
\right\vert  \\
&=&(-1)^{n+1}|\Gamma _{n+1}^{p+1}|^{2}\cdot |\Gamma _{n}^{p}+\Gamma
_{n}^{p+2}|.
\end{eqnarray*}%
Therefore,
\begin{equation*}
|\Gamma _{3n+2}^{3p+2}|=(-1)^{n+1}|\Gamma _{n+1}^{p+1}|^{2}\cdot |\Delta
_{n}^{p}|.
\end{equation*}

$10)$ Combine (\ref{kd}) and (\ref{h3n}), we have
\begin{eqnarray*}
|P^{t}\Delta _{3n}^{3p}P| &=&\left\vert
\begin{array}{ccc}
2\Gamma _{n}^{p} & \Gamma _{n}^{p+1} & \Gamma _{n}^{p} \\
\Gamma _{n}^{p+1} & \Gamma _{n}^{p} & 2\Gamma _{n}^{p+1} \\
\Gamma _{n}^{p} & 2\Gamma _{n}^{p+1} & \Gamma _{n}^{p+2}%
\end{array}%
\right\vert  \\
&\equiv &\left\vert
\begin{array}{ccc}
-\Gamma _{n}^{p} & \Gamma _{n}^{p+1} & \Gamma _{n}^{p} \\
\Gamma _{n}^{p+1} & \Gamma _{n}^{p} & -\Gamma _{n}^{p+1} \\
\Gamma _{n}^{p} & -\Gamma _{n}^{p+1} & \Gamma _{n}^{p+2}%
\end{array}%
\right\vert  \\
&=&\left\vert
\begin{array}{ccc}
-\Gamma _{n}^{p} & \Gamma _{n}^{p+1} & \boldsymbol{0}_{n\times n} \\
\Gamma _{n}^{p+1} & \Gamma _{n}^{p} & \boldsymbol{0}_{n\times n} \\
\Gamma _{n}^{p} & -\Gamma _{n}^{p+1} & \Gamma _{n}^{p}+\Gamma _{n}^{p+2}%
\end{array}%
\right\vert .
\end{eqnarray*}%
Hence, by Lemma \ref{lem:matirx},
\begin{eqnarray*}
|\Delta _{3n}^{3p}| &\equiv &|\Gamma _{n}^{p}+\Gamma _{n}^{p+2}|\cdot
\left\vert
\begin{array}{cc}
-\Gamma _{n}^{p} & \Gamma _{n}^{p+1} \\
\Gamma _{n}^{p+1} & \Gamma _{n}^{p}%
\end{array}%
\right\vert  \\
&=&(-1)^{n}|\Gamma _{n}^{p}|\cdot |\Delta _{n}^{p}|^{2}+(-1)^{n+1}|\Gamma
_{n+1}^{p}|\cdot |\Delta _{n-1}^{p}|\cdot |\Delta _{n}^{p}|.
\end{eqnarray*}

$11)$ Combine (\ref{kd}) and (\ref{h3n+1}), we have
\begin{eqnarray*}
\left\vert P^{t}\Delta _{3n+1}^{3p}P\right\vert  &\equiv &\left\vert
\begin{array}{ccc}
-\Gamma _{n+1}^{p} & \left( \Gamma _{n+1}^{p+1}\right) ^{(n+1)} & \left(
\Gamma _{n+1}^{p}\right) ^{(n+1)} \\
\left( \Gamma _{n+1}^{p+1}\right) _{(n+1)} & \Gamma _{n}^{p} & -\Gamma
_{n}^{p+1} \\
\left( \Gamma _{n+1}^{p}\right) _{(n+1)} & -\Gamma _{n}^{p+1} & \Gamma
_{n}^{p+2}%
\end{array}%
\right\vert  \\
&=&\left\vert
\begin{array}{ccc}
-\Gamma _{n+1}^{p} & \left( \Gamma _{n+1}^{p+1}\right) ^{(n+1)} &
\boldsymbol{0}_{(n+1)\times n} \\
\left( \Gamma _{n+1}^{p+1}\right) _{(n+1)} & \Gamma _{n}^{p} & \boldsymbol{0}%
_{n\times n} \\
\left( \Gamma _{n+1}^{p}\right) _{(n+1)} & -\Gamma _{n}^{p+1} & \Gamma
_{n}^{p}+\Gamma _{n}^{p+2}%
\end{array}%
\right\vert  \\
&=&\left\vert \Gamma _{n}^{p}+\Gamma _{n}^{p+2}\right\vert \cdot \left\vert
\begin{array}{cc}
-\Gamma _{n+1}^{p} & \left( \Gamma _{n+1}^{p+1}\right) ^{(n+1)} \\
\left( \Gamma _{n+1}^{p+1}\right) _{(n+1)} & \Gamma _{n}^{p}%
\end{array}%
\right\vert  \\
&=&\left\vert \Gamma _{n}^{p}+\Gamma _{n}^{p+2}\right\vert \cdot \left\vert
\begin{array}{cc}
-\Gamma _{n+1}^{p} & \boldsymbol{0}_{(n+1)\times n} \\
\left( \Gamma _{n+1}^{p+1}\right) _{(n+1)} & \Gamma _{n}^{p}+\Gamma
_{n}^{p+2}%
\end{array}%
\right\vert .
\end{eqnarray*}%
Hence,
\begin{equation*}
\left\vert \Delta _{3n+1}^{3p}\right\vert \equiv (-1)^{n+1}\left\vert \Gamma
_{n+1}^{p}\right\vert \cdot \left\vert \Delta _{n}^{p}\right\vert ^{2}.
\end{equation*}

$12)$ Combine (\ref{kd}) and (\ref{h3n+2}), we have
\begin{eqnarray*}
\left\vert P^{t}\Delta _{3n+2}^{3p}P\right\vert  &\equiv &\left\vert
\begin{array}{ccc}
-\Gamma _{n+1}^{p} & \Gamma _{n+1}^{p+1} & \left( \Gamma _{n+1}^{p}\right)
^{(n+1)} \\
\Gamma _{n+1}^{p+1} & \Gamma _{n+1}^{p} & -\left( \Gamma _{n+1}^{p+1}\right)
^{(n+1)} \\
\left( \Gamma _{n+1}^{p}\right) _{(n+1)} & -\left( \Gamma
_{n+1}^{p+1}\right) _{(n+1)} & \Gamma _{n}^{p+2}%
\end{array}%
\right\vert  \\
&=&\left\vert
\begin{array}{ccc}
-\Gamma _{n+1}^{p} & \Gamma _{n+1}^{p+1} & \boldsymbol{0}_{(n+1)\times n} \\
\Gamma _{n+1}^{p+1} & \Gamma _{n+1}^{p} & \boldsymbol{0}_{(n+1)\times n} \\
\left( \Gamma _{n+1}^{p}\right) _{(n+1)} & -\left( \Gamma
_{n+1}^{p+1}\right) _{(n+1)} & \Gamma _{n}^{p}+\Gamma _{n}^{p+2}%
\end{array}%
\right\vert .
\end{eqnarray*}

By Lemma \ref{lem:matirx},
\begin{eqnarray*}
\left\vert \Delta _{3n+2}^{3p}\right\vert  &\equiv &\left\vert \Delta
_{n}^{p}\right\vert \cdot \left\vert
\begin{array}{cc}
-\Gamma _{n+1}^{p} & \Gamma _{n+1}^{p+1} \\
\Gamma _{n+1}^{p+1} & \Gamma _{n+1}^{p}%
\end{array}%
\right\vert  \\
&=&(-1)^{n}\left\vert \Gamma _{n+2}^{p}\right\vert \cdot \left\vert \Delta
_{n}^{p}\right\vert ^{2}+(-1)^{n+1}\left\vert \Gamma _{n+1}^{p}\right\vert
\cdot \left\vert \Delta _{n}^{p}\right\vert \cdot \left\vert \Delta
_{n+1}^{p}\right\vert .
\end{eqnarray*}

$13)$ Combine (\ref{kd}) and (\ref{h3n}), we have
\begin{eqnarray*}
\left\vert P^{t}\Delta _{3n}^{3p+1}P\right\vert  &\equiv &\left\vert
\begin{matrix}
\Gamma _{n}^{p+1} & \Gamma _{n}^{p} & \mathbf{-}\Gamma _{n}^{p+1} \\
\Gamma _{n}^{p} & \mathbf{-}\Gamma _{n}^{p+1} & \Gamma _{n}^{p+2} \\
\mathbf{-}\Gamma _{n}^{p+1} & \Gamma _{n}^{p+2} & \Gamma _{n}^{p+1}%
\end{matrix}%
\right\vert  \\
&=&\left\vert
\begin{matrix}
\mathbf{0}_{n\times n} & \Gamma _{n}^{p}+\Gamma _{n}^{p+2} & \mathbf{0}%
_{n\times n} \\
\Gamma _{n}^{p} & \mathbf{-}\Gamma _{n}^{p+1} & \Gamma _{n}^{p+2} \\
\mathbf{-}\Gamma _{n}^{p+1} & \Gamma _{n}^{p+2} & \Gamma _{n}^{p+1}%
\end{matrix}%
\right\vert  \\
&=&(-1)^{n}\left\vert \Gamma _{n}^{p}+\Gamma _{n}^{p+2}\right\vert \cdot
\left\vert
\begin{matrix}
\Gamma _{n}^{p} & \Gamma _{n}^{p+2} \\
\mathbf{-}\Gamma _{n}^{p+1} & \Gamma _{n}^{p+1}%
\end{matrix}%
\right\vert  \\
&=&(-1)^{n}\left\vert \Gamma _{n}^{p}+\Gamma _{n}^{p+2}\right\vert \cdot
\left\vert
\begin{matrix}
\Gamma _{n}^{p}+\Gamma _{n}^{p+2} & \Gamma _{n}^{p+2} \\
\mathbf{0}_{n\times n} & \Gamma _{n}^{p+1}%
\end{matrix}%
\right\vert  \\
&=&(-1)^{n}\left\vert \Gamma _{n}^{p}+\Gamma _{n}^{p+2}\right\vert ^{2}\cdot
\left\vert \Gamma _{n}^{p+1}\right\vert .
\end{eqnarray*}%
Therefore,
\begin{equation*}
\left\vert \Delta _{3n}^{3p+1}\right\vert \equiv (-1)^{n}\left\vert \Gamma
_{n}^{p+1}\right\vert \cdot \left\vert \Delta _{n}^{p}\right\vert ^{2}.
\end{equation*}

$14)$ Combine (\ref{kd}) and (\ref{h3n+1}), we have
\begin{eqnarray*}
\left\vert P^{t}\Delta _{3n+1}^{3p+1}P\right\vert  &\equiv &\left\vert
\begin{array}{ccc}
\Gamma _{n+1}^{p+1} & \left( \Gamma _{n+1}^{p}\right) ^{(n+1)} & -\left(
\Gamma _{n+1}^{p+1}\right) ^{(n+1)} \\
\left( \Gamma _{n+1}^{p}\right) _{(n+1)} & -\Gamma _{n}^{p+1} & \Gamma
_{n}^{p+2} \\
-\left( \Gamma _{n+1}^{p+1}\right) _{(n+1)} & \Gamma _{n}^{p+2} & \Gamma
_{n}^{p+1}%
\end{array}%
\right\vert  \\
&=&\left\vert
\begin{array}{ccc}
\Gamma _{n+1}^{p+1} & \left( \Gamma _{n+1}^{p}\right) ^{(n+1)} & \mathbf{0}%
_{(n+1)\times n} \\
\left( \Gamma _{n+1}^{p}\right) _{(n+1)} & -\Gamma _{n}^{p+1} & \Gamma
_{n}^{p}+\Gamma _{n}^{p+2} \\
-\left( \Gamma _{n+1}^{p+1}\right) _{(n+1)} & \Gamma _{n}^{p+2} & \mathbf{0}%
_{n\times n}%
\end{array}%
\right\vert  \\
&=&(-1)^{n}\left\vert \Gamma _{n}^{p}+\Gamma _{n}^{p+2}\right\vert \cdot
\left\vert
\begin{array}{cc}
\Gamma _{n+1}^{p+1} & \left( \Gamma _{n+1}^{p}\right) ^{(n+1)} \\
-\left( \Gamma _{n+1}^{p+1}\right) _{(n+1)} & \Gamma _{n}^{p+2}%
\end{array}%
\right\vert  \\
&=&(-1)^{n}\left\vert \Gamma _{n}^{p}+\Gamma _{n}^{p+2}\right\vert \cdot
\left\vert
\begin{array}{cc}
\Gamma _{n+1}^{p+1} & \left( \Gamma _{n+1}^{p}\right) ^{(n+1)} \\
0_{n\times (n+1)} & \Gamma _{n}^{p}+\Gamma _{n}^{p+2}%
\end{array}%
\right\vert .
\end{eqnarray*}%
Hence,
\begin{equation*}
\left\vert \Delta _{3n+1}^{3p+1}\right\vert \equiv (-1)^{n}\left\vert \Gamma
_{n+1}^{p+1}\right\vert \cdot \left\vert \Delta _{n}^{p}\right\vert ^{2}.
\end{equation*}

$15)$ Combine (\ref{kd}) and (\ref{h3n+2}), we have
\begin{eqnarray*}
\left\vert P^{t}\Delta _{3n+2}^{3p+1}P\right\vert  &\equiv &\left\vert
\begin{array}{ccc}
\Gamma _{n+1}^{p+1} & \Gamma _{n+1}^{p} & -\left( \Gamma _{n+1}^{p+1}\right)
^{(n+1)} \\
\Gamma _{n+1}^{p} & -\Gamma _{n+1}^{p+1} & \left( \Gamma _{n+1}^{p+2}\right)
^{(n+1)} \\
-\left( \Gamma _{n+1}^{p+1}\right) _{(n+1)} & \left( \Gamma
_{n+1}^{p+2}\right) _{(n+1)} & \Gamma _{n}^{p+1}%
\end{array}%
\right\vert  \\
&=&\left\vert
\begin{array}{ccc}
\Gamma _{n+1}^{p+1} & \Gamma _{n+1}^{p} & \mathbf{0}_{(n+1)\times n} \\
\Gamma _{n+1}^{p} & -\Gamma _{n+1}^{p+1} & \mathbf{0}_{(n+1)\times n} \\
-\left( \Gamma _{n+1}^{p+1}\right) _{(n+1)} & \left( \Gamma
_{n+1}^{p+2}\right) _{(n+1)} & \Gamma _{n}^{p+1}+\Gamma _{n}^{p+3}%
\end{array}%
\right\vert  \\
&=&\left\vert \Gamma _{n}^{p+1}+\Gamma _{n}^{p+3}\right\vert \cdot
\left\vert
\begin{array}{cc}
\Gamma _{n+1}^{p+1} & \Gamma _{n+1}^{p} \\
\Gamma _{n+1}^{p} & -\Gamma _{n+1}^{p+1}%
\end{array}%
\right\vert .
\end{eqnarray*}%
By Lemma \ref{lem:matirx},
\begin{equation*}
\left\vert \Delta _{3n+2}^{3p+1}\right\vert \equiv (-1)^{n}\left\vert \Gamma
_{n+2}^{p}\right\vert \cdot \left\vert \Delta _{n}^{p}\right\vert \cdot
\left\vert \Delta _{n}^{p+1}\right\vert +(-1)^{n+1}\left\vert \Gamma
_{n+1}^{p}\right\vert \cdot \left\vert \Delta _{n}^{p+1}\right\vert \cdot
\left\vert \Delta _{n+1}^{p}\right\vert .
\end{equation*}

$16)$ Combine (\ref{kd}) and (\ref{h3n}), we have
\begin{eqnarray*}
\left\vert P^{t}\Delta _{3n}^{3p+2}P\right\vert  &\equiv &\left\vert
\begin{matrix}
\Gamma _{n}^{p} & -\Gamma _{n}^{p+1} & \Gamma _{n}^{p+2} \\
-\Gamma _{n}^{p+1} & \Gamma _{n}^{p+2} & \Gamma _{n}^{p+1} \\
\Gamma _{n}^{p+2} & \Gamma _{n}^{p+1} & -\Gamma _{n}^{p+2}%
\end{matrix}%
\right\vert  \\
&=&\left\vert
\begin{matrix}
\Gamma _{n}^{p}+\Gamma _{n}^{p+2} & \mathbf{0}_{n\times n} & \mathbf{0}%
_{n\times n} \\
-\Gamma _{n}^{p+1} & \Gamma _{n}^{p+2} & \Gamma _{n}^{p+1} \\
\Gamma _{n}^{p+2} & \Gamma _{n}^{p+1} & -\Gamma _{n}^{p+2}%
\end{matrix}%
\right\vert  \\
&=&\left\vert \Gamma _{n}^{p}+\Gamma _{n}^{p+2}\right\vert \cdot \left\vert
\begin{matrix}
\Gamma _{n}^{p+2} & \Gamma _{n}^{p+1} \\
\Gamma _{n}^{p+1} & -\Gamma _{n}^{p+2}%
\end{matrix}%
\right\vert .
\end{eqnarray*}%
By Lemma \ref{lem:matirx},
\begin{equation*}
\left\vert \Delta _{3n}^{3p+2}\right\vert \equiv (-1)^{n}\left\vert \Gamma
_{n}^{p+1}\right\vert \cdot \left\vert \Delta _{n}^{p}\right\vert \cdot
\left\vert \Delta _{n}^{p+1}\right\vert +(-1)^{n+1}\left\vert \Gamma
_{n+1}^{p+1}\right\vert \cdot \left\vert \Delta _{n}^{p}\right\vert \cdot
\left\vert \Delta _{n-1}^{p+1}\right\vert .
\end{equation*}

$17)$ Combine (\ref{kd}) and (\ref{h3n+1}), we have
\begin{eqnarray*}
\left\vert P^{t}\Delta _{3n+1}^{3p+2}P\right\vert  &\equiv &\left\vert
\begin{matrix}
\Gamma _{n+1}^{p} & -\left( \Gamma _{n+1}^{p+1}\right) ^{(n+1)} & \left(
\Gamma _{n+1}^{p+2}\right) ^{(n+1)} \\
-\left( \Gamma _{n+1}^{p+1}\right) _{(n+1)} & \Gamma _{n}^{p+2} & \Gamma
_{n}^{p+1} \\
\left( \Gamma _{n+1}^{p+2}\right) _{(n+1)} & \Gamma _{n}^{p+1} & -\Gamma
_{n}^{p+2}%
\end{matrix}%
\right\vert  \\
&=&\left\vert
\begin{matrix}
\Gamma _{n+1}^{p} & \mathbf{0}_{(n+1)\times n} & \left( \Gamma
_{n+1}^{p+2}\right) ^{(n+1)} \\
-\left( \Gamma _{n+1}^{p+1}\right) _{(n+1)} & \mathbf{0}_{n\times n} &
\Gamma _{n}^{p+1} \\
\left( \Gamma _{n+1}^{p+2}\right) _{(n+1)} & \Gamma _{n}^{p+1}+\Gamma
_{n}^{p+3} & -\Gamma _{n}^{p+2}%
\end{matrix}%
\right\vert  \\
&=&(-1)^{n}\left\vert \Gamma _{n}^{p+1}+\Gamma _{n}^{p+3}\right\vert \cdot
\left\vert
\begin{matrix}
\Gamma _{n+1}^{p} & \left( \Gamma _{n+1}^{p+2}\right) ^{(n+1)} \\
-\left( \Gamma _{n+1}^{p+1}\right) _{(n+1)} & \Gamma _{n}^{p+1}%
\end{matrix}%
\right\vert  \\
&=&(-1)^{n}\left\vert \Gamma _{n}^{p+1}+\Gamma _{n}^{p+3}\right\vert \cdot
\left\vert
\begin{matrix}
\Gamma _{n+1}^{p} & \left( \Gamma _{n+1}^{p+2}\right) ^{(n+1)} \\
0_{n\times (n+1)} & \Gamma _{n}^{p+1}+\Gamma _{n}^{p+3}%
\end{matrix}%
\right\vert .
\end{eqnarray*}%
Hence,
\begin{equation*}
\left\vert \Delta _{3n+1}^{3p+2}\right\vert \equiv (-1)^{n}\left\vert \Gamma
_{n+1}^{p}\right\vert \cdot \left\vert \Delta _{n}^{p+1}\right\vert ^{2}.
\end{equation*}

$18)$ Combine (\ref{kd}) and (\ref{h3n+2}), we have
\begin{eqnarray*}
\left\vert P^{t}\Delta _{3n+2}^{3p+2}P\right\vert  &\equiv &\left\vert
\begin{matrix}
\Gamma _{n+1}^{p} & -\Gamma _{n+1}^{p+1} & \left( \Gamma _{n+1}^{p+2}\right)
^{(n+1)} \\
-\Gamma _{n+1}^{p+1} & \Gamma _{n+1}^{p+2} & \left( \Gamma
_{n+1}^{p+1}\right) ^{(n+1)} \\
\left( \Gamma _{n+1}^{p+2}\right) _{(n+1)} & \left( \Gamma
_{n+1}^{p+1}\right) _{(n+1)} & -\Gamma _{n}^{p+2}%
\end{matrix}%
\right\vert  \\
&=&\left\vert
\begin{matrix}
\Gamma _{n+1}^{p} & -\Gamma _{n+1}^{p+1} & \mathbf{0}_{(n+1)\times n} \\
-\Gamma _{n+1}^{p+1} & \Gamma _{n+1}^{p+2} & \left( \Gamma
_{n+1}^{p+1}+\Gamma _{n+1}^{p+3}\right) ^{(n+1)} \\
\left( \Gamma _{n+1}^{p+2}\right) _{(n+1)} & \left( \Gamma
_{n+1}^{p+1}\right) _{(n+1)} & \mathbf{0}_{n\times n}%
\end{matrix}%
\right\vert  \\
&=&\left\vert
\begin{matrix}
\Gamma _{n+1}^{p} & -\Gamma _{n+1}^{p+1} & \mathbf{0}_{(n+1)\times n} \\
\begin{array}{c}
\mathbf{0}_{n\times (n+1)} \\
-\left( \alpha _{p+n+1}^{n+1}\right) ^{t}%
\end{array}
&
\begin{array}{c}
\mathbf{0}_{n\times (n+1)} \\
\left( \alpha _{p+n+2}^{n+1}\right) ^{t}%
\end{array}
& \left( \Gamma _{n+1}^{p+1}+\Gamma _{n+1}^{p+3}\right) ^{(n+1)} \\
\left( \Gamma _{n+1}^{p+2}\right) _{(n+1)} & \left( \Gamma
_{n+1}^{p+1}\right) _{(n+1)} & \mathbf{0}_{n\times n}%
\end{matrix}%
\right\vert  \\
&=&\left\vert \Gamma _{n}^{p+1}+\Gamma _{n}^{p+3}\right\vert \cdot
\left\vert
\begin{matrix}
\Gamma _{n+1}^{p} & -\Gamma _{n+1}^{p+1} \\
-\left( \alpha _{p+n+1}^{n+1}\right) ^{t} & \left( \alpha
_{p+n+2}^{n+1}\right) ^{t} \\
\left( \Gamma _{n+1}^{p+2}\right) _{(n+1)} & \left( \Gamma
_{n+1}^{p+1}\right) _{(n+1)}%
\end{matrix}%
\right\vert  \\
&=&\left\vert \Gamma _{n}^{p+1}+\Gamma _{n}^{p+3}\right\vert \cdot
(-1)\left\vert
\begin{matrix}
\left( \Gamma _{n+2}^{p}\right) ^{(n+2)} & -\left( \Gamma
_{n+2}^{p+1}\right) ^{(n+2)} \\
\left( \Gamma _{n+1}^{p+2}\right) _{(n+1)} & \left( \Gamma
_{n+1}^{p+1}\right) _{(n+1)}%
\end{matrix}%
\right\vert .
\end{eqnarray*}%
Note that
\begin{eqnarray*}
&&\left\vert
\begin{matrix}
\left( \Gamma _{n+2}^{p}\right) ^{(n+2)} & -\left( \Gamma
_{n+2}^{p+1}\right) ^{(n+2)} \\
\left( \Gamma _{n+1}^{p+2}\right) _{(n+1)} & \left( \Gamma
_{n+1}^{p+1}\right) _{(n+1)}%
\end{matrix}%
\right\vert  \\
&=&\left\vert
\begin{array}{ccc}
\left( \Gamma _{n+2}^{p}\right) ^{(n+2)} & \mathbf{0}_{(n+2)\times n} &
-\alpha _{p+n+1}^{n+2} \\
\left( \Gamma _{n+1}^{p+2}\right) _{(n+1)} & \Gamma _{n}^{p+1}+\Gamma
_{n}^{p+3} & \alpha _{p+n+1}^{n}%
\end{array}%
\right\vert  \\
&=&(-1)^{n}\left\vert \Gamma _{n}^{p+1}+\Gamma _{n}^{p+3}\right\vert \cdot
(-1)\left\vert \Gamma _{n+2}^{p}\right\vert ,
\end{eqnarray*}%
we have
\begin{equation*}
\left\vert \Delta _{3n+2}^{3p+2}\right\vert \equiv (-1)^{n}\left\vert \Gamma
_{n+2}^{p}\right\vert \cdot \left\vert \Delta _{n}^{p+1}\right\vert ^{2}.
\end{equation*}
\end{proof}

\end{document}